\newtheorem{thm}{Theorem}[section] 
\newtheorem{prop}[thm]{Proposition}
\newtheorem{lemma}[thm]{Lemma}
\theoremstyle{definition}
\newtheorem{example}[thm]{Example}
\theoremstyle{remark}
\newtheorem{rmk}[thm]{Remark}
\numberwithin{equation}{section}
\newcommand{\Q}{\mathbb Q}
\newcommand{\Z}{\mathbb Z}
\newcommand{\G}{\mathbb G}
\newcommand{\Spec}{\operatorname{Spec}}
\renewcommand{\c}{\subseteq}
\newcommand{\A}{\mathbb A}
\newcommand{\mc}[1]{\mathcal{#1}}
\newcommand{\cl}{\overline}
\newcommand{\set}[1]{\{#1\}}
\renewcommand{\phi}{\varphi}
\newcommand{\on}[1]{\operatorname{#1}}
\newcommand{\ang}[1]{\left \langle{#1}\right \rangle}
\DeclareFontFamily{U}{wncy}{}
\DeclareFontShape{U}{wncy}{m}{n}{<->wncyr10}{}
\DeclareSymbolFont{mcy}{U}{wncy}{m}{n}
\DeclareMathSymbol{\Sh}{\mathord}{mcy}{"58}
\title{On the Noether Problem for torsion subgroups of tori}
\author{Federico Scavia}
\begin{document}
	\begin{abstract}
		We consider the Noether Problem for stable and retract rationality for the sequence of $d$-torsion subgroups $T[d]$ of a torus $T$, $d\geq 1$. We show that the answer to these questions only depends on $d\pmod{e(T)}$, where $e(T)$ is the period of the generic $T$-torsor. When $T$ is the norm one torus associated to a finite Galois extension, we find all $d$ such that the Noether Problem for retract rationality has a positive solution for $d$. We also give an application to the Grothendieck ring of stacks.
	\end{abstract}	
	
	\maketitle

	\section{Introduction}
	Let $k$ be a field, $G$ a finite group (i.e. a finite constant group scheme over $k$) and $V$ a faithful $k$-linear $G$-representation. The Noether Problem asks whether the quotient variety $V/G$ is rational, that is, birational to some affine space over $k$. This question originated in Noether's work on the Inverse Galois Problem \cite{noether1917gleichungen}. If $k = \Q$, and $V/G$ is rational over $G$ for some $V$, then Hilbert's Irreducibility Theorem implies that $G$ arises as a Galois group over $\Q$; see \cite[\S 3.3, \S 5.1]{jensen2002generic}.
	
	It turns out that the Noether Problem does not always have a positive solution. Swan and Voskresenskii gave counter-examples to the Noether Problem for some cyclic groups of prime order ($\Z/47\Z$, $\Z/113\Z$, $\Z/223\Z$,...) in \cite{swan1969invariant} and \cite{voskresenskii1970question}. Later, Lenstra showed in \cite{lenstra1974rational} that the smallest group for which the Noether Problem fails is the cyclic group $\Z/8\Z$, and further he gave a complete classification of abelian groups for which the Noether Problem fails. 
	
	Subsequent work naturally led to several variants of the original Noether Problem. For example, one may ask if 
	the variety $V/G$ is stably rational, or retract rational; see \cite[\S 1]{colliot2007rationality} for the definitions.
	The examples of Swan and Voskresenskii give a negative answer to the Noether Problem for stable rationality and, by a result of Saltman \cite[Theorem 4.12]{saltman1984retract}, a positive answer for retract rationality. On the other hand $\Z/8\Z$ answers both problems negatively.
	
	One may also consider the Noether Problem for more general group schemes $G$. Let $G$ be a linear algebraic group (not necessarily smooth), and let $V$ be a finite-dimensional generically free representation of $G$. There exists a dense $G$-invariant open subset of $V$ such that a geometric quotient $U/G$ exists and $U\to U/G$ is a $G$-torsor. The variety $U/G$ may be regarded as an approximation of the classifying stack $BG$. For example, by the no-name lemma \cite[Lemma 2.1]{reichstein2006birational}, the stable (retract) rationality of $U/G$ does not depend on the representation $V$ but only on $G$. We say that $BG$ is stably (retract) rational if so is $U/G$, that is, if the Noether Problem for stable (retract) rationality has an affirmative answer; see also \cite[\S 3]{florence2018genus0}.
	
	Let $k$ be a field and $T$ a $k$-torus. For every $d\geq 1$, we will consider the Noether Problem for stable rationality and retract rationality for the torsion subgroup $T[d]$. Note that these questions are not covered by the above-mentioned results of Lenstra~\cite{lenstra1974rational}. Lenstra solved the Noether Problem for
	finite abelian groups. The torsion subgroup schemes $T[d]$ we will consider are finite and abelian, but not necessarily constant.
	
	In \Cref{periodicity} we show that the answer to each of these versions of the Noether Problem for the torsion subgroups $T[d]$ 
	(of a particular torus $T$) is periodic in $d$:
	
	\begin{thm}\label{periodicity}
		Let $T$ be a $k$-torus and $m,n\geq 1$.
		\begin{enumerate}[label=(\alph*)]
			\item\label{periodicity1} If $m\equiv \pm n\pmod{e(T)}$, then $BT[m]$ and $BT[n]$ are stably birational. 
			\item\label{periodicity2} If $e(T)\mid n$, then $BT[n]$ is stably birational to $T\times BT$.
			\item\label{periodicity5} If $n\equiv \pm 1\pmod{e(T)}$, then $BT[n]$ is stably rational.
			\item\label{periodicity4} If $\ang{m}=\ang{n}$ in $\Z/e(T)\Z$, then $BT[m]$ is retract rational if and only if $BT[n]$ is. In particular, $BT[m]$ is retract rational when $m$ is invertible in $\Z/e(T)\Z$.
		\end{enumerate}
	\end{thm}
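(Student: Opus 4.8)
The key elementary fact behind (d) is this: if $\mathcal H$ is any abelian group of exponent dividing $e(T)$, and $d,d'\geq 1$ satisfy $\ang{d}=\ang{d'}$ in $\Z/e(T)\Z$ (equivalently $\gcd(d,e(T))=\gcd(d',e(T))$), then the endomorphisms ``multiplication by $d$'' and ``multiplication by $d'$'' of $\mathcal H$ have the same kernel and the same image. Indeed, since $e(T)\cdot\mathcal H=0$ one has $\mathcal H[d]=\mathcal H[\gcd(d,e(T))]$ and $d\,\mathcal H=\gcd(d,e(T))\,\mathcal H$, so both $\ker(\times d)$ and $\operatorname{im}(\times d)$ depend only on $\gcd(d,e(T))$, hence only on $\ang d$. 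Consequently every invariant manufactured out of $\ker(\times d)$, $\operatorname{im}(\times d)$, or $\operatorname{coker}(\times d)$ on such an $\mathcal H$ is constant on the set of $d$ with a fixed $\ang d$.

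The plan is therefore to exhibit the retract rationality of $BT[d]$ as exactly such an invariant. Using the no-name lemma to replace $BT[d]$ by an approximation $U/T[d]$, and the defining exact sequence $1\to T[d]\to T\xrightarrow{\ d\ }T\to 1$, I would realize $BT[d]$ as the ``cone'' of the isogeny $\times d$ on $BT$, so that its retract-rationality type is controlled by the action of $\times d$ on the cohomological invariants attached to $T$ (flabby/coflabby resolutions of the character lattice $M=\widehat T$, and the associated unramified invariants in the sense of Saltman and Colliot-Th\'el\`ene--Sansuc). The content of the period $e(T)$ enters here as the statement that these obstruction groups $\mathcal H$ are annihilated by $e(T)$: this is precisely the assertion that the generic $T$-torsor has period $e(T)$, so that $e(T)$ is the exponent of the relevant $\ker/\operatorname{coker}$.

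Granting this translation, part (d) is immediate: by the first paragraph the retract-rationality obstruction for $BT[m]$ and for $BT[n]$ coincides whenever $\ang m=\ang n$, so one is retract rational if and only if the other is. For the ``in particular'' clause, if $m$ is invertible in $\Z/e(T)\Z$ then $\ang m=\ang 1=\Z/e(T)\Z$, and $BT[1]=\Spec k$ is rational, hence retract rational; transferring along the equivalence just established shows $BT[m]$ is retract rational.

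The main obstacle is the translation in the second paragraph, i.e.\ pinning down an invariant whose vanishing is equivalent to the retract rationality of $BT[d]$ and which is visibly functorial in the self-map $\times d$ of $BT$. I would approach this by combining the retract-rationality criterion for classifying spaces of groups of multiplicative type (reducing to an invertibility/direct-summand property of the finite Galois module $T[d]^{\vee}=M/dM$) with a flabby resolution $0\to M\to P\to Q\to 0$ of $M$ by permutation lattices: reducing mod $d$ and taking cohomology expresses the obstruction through $\ker$ and $\operatorname{coker}$ of $\times d$ on groups such as $\widehat H^{*}(\,\cdot\,,Q)$, which are killed by $e(T)$. The remaining bookkeeping---checking that $e(T)$ is exactly the exponent that appears, and that the direct-summand condition for $M/dM$ matches the $\ker/\operatorname{coker}$ description---is routine once the resolution is in place, but it is the step that requires genuine care.
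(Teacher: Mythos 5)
There is a genuine gap, and it is precisely at the step you flag as the ``main obstacle'' and then set aside as routine bookkeeping: the translation of retract rationality of $BT[d]$ into an invariant manufactured from $\ker$, $\operatorname{im}$, or $\operatorname{coker}$ of multiplication by $d$ on abelian groups killed by $e(T)$. This translation not only is not carried out; it cannot work as described. Via the versal torsor $S\to T_d$ (with $S$ quasi-split), retract rationality of $BT[d]$ is equivalent to retract rationality of the torus $T_d$, i.e.\ to \emph{invertibility} of the flasque part $[\hat{T}_d]^{fl}$ (Saltman), and invertibility is strictly finer than any cohomological data of the kind you propose. The paper itself exhibits this: for $G=Q_8$, $d=4$ and for $G=(\Z/2\Z)^3$, $d=2$, the lattice $[\hat{T}_d]^{fl}$ is both flasque and coflasque --- so $\Sh^2$ and every Tate-cohomological obstruction built from $\hat{H}^*$ of permutation modules vanishes --- yet it is not invertible and $BT[d]$ is \emph{not} retract rational (\Cref{gap} and the remark following it). Hence no invariant factoring through $\ker/\operatorname{coker}$ of $\times d$ on $e(T)$-torsion groups can be equivalent to retract rationality, and part \ref{periodicity4} is not ``immediate'' from your first paragraph.

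What actually makes \ref{periodicity4} work in the paper is different. One first shows (\Cref{multiple}) that the lattice extension class of $\hat{T}_d$ is $d\cdot\alpha$, where $\alpha\in\on{Ext}^1_G(\hat{T},\hat{Q})$ is the class of the coflasque resolution and has order exactly $e(T)$ by Merkurjev's theorem --- this, not an exponent bound on unramified invariants, is where the period enters. But $\ang{m}=\ang{n}$ in $\Z/e(T)\Z$ only says $m\alpha$ and $n\alpha$ generate the same cyclic subgroup of $\on{Ext}^1$; they need not be equal, so $T_m\not\cong T_n$ in general, and this is exactly the point your $\gcd$ reduction glosses over. The paper writes $m\equiv an\pmod{e(T)}$ with $a$ invertible, gets $T_m\cong T_{an}$ by \Cref{multiple}, and then compares $T_{an}$ with $T_n$ through a surjection of tori of degree prime to $|G|$, invoking \Cref{surjretract}: invertibility of flasque lattices is a local condition at the primes dividing $|G|$ (Curtis--Reiner 31.3(ii), via a pushout of localized resolutions), hence is preserved under prime-to-$|G|$ isogenies. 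This genus-theoretic lemma is the missing ingredient, and it is the real content of \ref{periodicity4}. Note also that your proposal does not address \ref{periodicity1}--\ref{periodicity5}, which rest on the same computation $\alpha_d=d\alpha$ together with the identification of $BT[d]$, up to stable birational equivalence, with $T_d$ --- an identification your ``cone of the isogeny'' picture gestures at but does not establish.
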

	Here $e(T)$ denotes the period of a generic $T$-torsor (see \Cref{preliminaries} for the definition). If $G$ is a splitting group for $T$, we have $e(T)\mid |G|$; see \Cref{period}\ref{period1}.
	
	To illustrate \Cref{periodicity}, we investigate more closely the Noether Problem for retract rationality of $T[d]$ in the case where $T=R^{(1)}_{L/k}(\G_m)$ is a norm-one torus and $L/k$ is a finite Galois extension. By \Cref{reducetosylow} it is enough to consider the case when $G$ is a $p$-group. For every $L/k$ and every $d\geq 1$, we determine whether $BT[d]$ is retract rational or not.
	
	\begin{thm}\label{classify}
		Let $L/k$ be a finite Galois extension such that $G:=\on{Gal}(L/k)$ has order $p^n$ for some prime number $p$, and let $T:=R^{(1)}_{L/k}(\G_m)$. Assume that $p$ is odd.
		\begin{enumerate}[label=(\alph*)]
			\item\label{classify1} If $G$ is cyclic, then $BT[d]$ is retract rational for every $d$.
			\item\label{classify2} If $G$ is not cyclic, then $BT[d]$ is retract rational if and only if $p\nmid d$.
		\end{enumerate}
		Assume that $p=2$.
		\begin{enumerate}[label=(\alph*')]
			\item\label{classify3} If $G$ is cyclic, then $BT[d]$ is retract rational for every $d$.
			\item\label{classify4} If $G$ is a dihedral group, then $BT[d]$ is retract rational if and only if $4\nmid d$.
			\item\label{classify5} Otherwise, $BT[d]$ is retract rational if and only if $d$ is odd.
		\end{enumerate}
	\end{thm}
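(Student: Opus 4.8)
The plan is to combine the periodicity result \Cref{periodicity} with a subgroup-local cohomological criterion for retract rationality of $BT[d]$, reducing everything to a Tate-cohomology computation for $\Gamma := \on{Gal}(L/k)$ and its cyclic subgroups. First I would pin down the period. For $T = R^{(1)}_{L/k}(\G_m)$ the norm-one torsor $\{x : N_{L/k}(x) = t\}$ over $k(t)$ is a $T$-torsor whose class in $H^1(k(t),T) = k(t)^\times/N((L\otimes_k k(t))^\times)$ is the class of $t$; since $t^{|\Gamma|} = N_{L(t)/k(t)}(t)$ while $t$ fails to be a local norm at the inert place $t=0$, where the residue degree equals $[L:k] = |\Gamma|$, this class has period exactly $|\Gamma| = p^n$. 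As $e(T) \mid |\Gamma|$ by \Cref{period}\ref{period1} and the generic torsor dominates this one (so $|\Gamma| \mid e(T)$), we get $e(T) = p^n$. By \Cref{periodicity}\ref{periodicity4} the retract rationality of $BT[d]$ then depends only on $\ang{d} \le \Z/p^n\Z$, i.e.\ only on $j := \min(v_p(d), n)$; thus it suffices to decide, for each $0 \le j \le n$, whether $BT[p^j]$ is retract rational. The case $j = 0$ is automatic, since then $d$ is a unit modulo $p^n$.

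Next I would invoke the criterion for retract rationality of $BT[p^j]$ in terms of the $\Gamma$-module $M_j := \hat{T}/p^j\hat{T}$, where $\hat T = \Z[\Gamma]/\ang{N_\Gamma}$ is the character lattice, so that $T[p^j]$ is the Cartier dual $D(M_j)$ whose Galois module carries the cyclotomic Tate twist. Concretely, $BT[p^j]$ should be retract rational if and only if the part of the relevant obstruction group detected on every cyclic subgroup vanishes, i.e.\ a Tate--Shafarevich-type group $\on{Sha}_\omega(\Gamma, M_j)$ of classes restricting to zero on all cyclic $C \le \Gamma$. This is the natural retract analogue of the flasque-class/Hasse-norm obstruction for the torus $T$ itself, and it is what makes the answer periodic and subgroup-local.

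With this criterion the cyclic cases \ref{classify1} and \ref{classify3} are immediate: if $\Gamma$ is cyclic then every subgroup is cyclic, so $\on{Sha}_\omega(\Gamma, M_j) = 0$ for all $j$ and $BT[p^j]$ is retract rational for every $d$. (Equivalently, for cyclic $\Gamma$ the lattice $\hat T$ is invertible, so $T[p^j]$ is a retract of an induced $\mu$-type group scheme with rational classifying space.) For the non-cyclic cases I would compute $\on{Sha}_\omega(\Gamma, M_j)$ as a function of $j$, using the isomorphisms $\hat H^i(\Gamma, \hat T) \cong \hat H^{i+1}(\Gamma, \Z)$ arising from $0 \to \Z \xrightarrow{N_\Gamma} \Z[\Gamma] \to \hat T \to 0$ together with the long exact sequence for multiplication by $p^j$, which turns the problem into one about $H^\ast(\Gamma, \Z)$ and its restriction to cyclic subgroups. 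For odd $p$ a non-cyclic $\Gamma$ contains a copy of $\Z/p \times \Z/p$, and one checks that the obstruction is already nonzero at $j = 1$, giving \ref{classify2}.

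The main obstacle is the $p = 2$ analysis, where the threshold is $2$ for dihedral $\Gamma$ but $1$ for every other non-cyclic $2$-group. Here two things must be controlled at once: the cyclotomic Tate twist in the Galois action on $T[2^j]$, and the delicate dependence of $\on{Sha}_\omega$ on the subgroup lattice of $\Gamma$. The crux is to prove that for dihedral $\Gamma$ the level-one obstruction $\on{Sha}_\omega(\Gamma, M_1)$ vanishes, so that $BT[2]$ remains retract rational, while $\on{Sha}_\omega(\Gamma, M_2) \neq 0$; and to show by contrast that for every other non-cyclic $2$-group (elementary abelian, $\Z/4 \times \Z/2$, quaternion, semidihedral, modular, \dots) the obstruction is already nonzero at $j = 1$. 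I expect this to reduce to a finite but careful case analysis: exhibiting inside each such $\Gamma$ a non-cyclic abelian subgroup on which a specific class of $H^\ast(\Gamma, \Z)$ fails to vanish, and verifying that the dihedral groups are exactly those for which this first occurs at level $2$ rather than level $1$. Assembling these computations with the threshold reduction above yields the remaining cases \ref{classify4} and \ref{classify5}.
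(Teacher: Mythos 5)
Your reduction is fine as far as it goes (the local computation of $e(T)=|G|$ via the class of $t$ in $k(t)^\times/N$, with the valuation obstruction at $t=0$, is a correct alternative to the paper's route through $e(T)=e(T')$ and \cite[Example 4.1]{merkur2010periods}, and the reduction via \Cref{periodicity}\ref{periodicity4} to $d=p^j$ matches the paper), but the central criterion you propose is false, and it fails exactly at the decisive cases. By Saltman's criterion \cite[Theorem 3.14(a)]{saltman1984retract}, $BT[d]$ is retract rational if and only if the flasque class $[\hat{T}_d]^{fl}$ is \emph{invertible}; the subgroup-local obstruction you call $\on{Sha}_\omega$ is, via (\ref{shah1}) and \Cref{finalisom}, the group $\Sh^2(G',\Z/d\Z)\cong H^1(G',[\hat{T}_d]^{fl})$ ranging over subgroups $G'$, and its vanishing for all $G'$ says only that $[\hat{T}_d]^{fl}$ is \emph{coflasque}. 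For $2$-groups, flasque and coflasque does not imply invertible (Colliot-Th\'el\`ene--Sansuc's example $\Z[Q_8]/\ang{\sigma_{Q_8}}$ in \cite{colliot1977r} is flasque, coflasque and non-invertible). Accordingly the paper proves only the one valid implication, \Cref{sha}: $\Sh^2(G,\Z/d\Z)\neq 0$ implies $BT[d]$ not retract rational. Your ``if and only if'' returns the wrong answer precisely where the theorem's dichotomy is decided: for $(G,d)=((\Z/2\Z)^3,2)$ and $(Q_8,4)$ one has $\Sh^2(G,\Z/d\Z)=0$, yet $BT[d]$ is \emph{not} retract rational. Your plan would therefore ``prove'' that $BT[2]$ is retract rational for $G=(\Z/2\Z)^3$ and $BT[4]$ for $G=Q_8$, contradicting \ref{classify5}; no class of the kind you seek exists in these cases, so the case analysis you envisage cannot close. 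The paper handles these two cases by an entirely different mechanism: the explicit coflasque resolution (\ref{cofl}) of $M_d$ and a GAP computation (\Cref{gap}, Algorithm F2 of \cite[\S 5.2]{hoshi2017rationality}) verifying non-invertibility of the flasque class directly, after which the remaining non-cyclic, non-dihedral $2$-groups are reached by the group-theoretic induction \Cref{restrictive}\ref{restrictive3} combined with \Cref{subgroup}.

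Two further points. First, even where your criterion's conclusion is correct --- the positive dihedral case --- vanishing of $\Sh^2$ on all subgroups is not enough by itself: the paper's \Cref{smallcases}\ref{dih} deduces from it only that $[\hat{T}_2]^{fl}$ is coflasque, and must then invoke the Endo--Miyata/Voskresenskii theorem (\cite[\S 4, Theorem 5]{voskresenskii2011algebraic}) that flasque-and-coflasque lattices over dihedral groups are invertible; this is exactly the step that is unavailable for $Q_8$ and $(\Z/2\Z)^3$. Second, the cyclotomic twist you worry about is a red herring: the paper never analyzes the Galois module $T[d]$ itself, but replaces $BT[d]$ by the torus $T_d$ (the base of the versal torsor $S\to T_d$), so that everything happens at the level of the lattice $\hat{T}_d$ and its flasque class. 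Your odd-$p$ step is fine in outline and matches \Cref{psquare} (the Heisenberg extension witnesses $\Sh^2(\Z/p\Z\times\Z/p\Z,\Z/p\Z)\neq 0$), as do the cyclic cases via \cite[Proposition 2]{colliot1977r}; the gap is confined to, but fatal for, the $p=2$ non-dihedral analysis.
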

	Assume that $\on{char}k=p$ is positive. Then it is well-known that for every finite $p$-group $\Gamma$, $B\Gamma$ is stably rational; see \cite[\S 5.6]{jensen2002generic}. By contrast, \Cref{classify} gives examples of finite group schemes $A$ of order a power of $p$ for which $BA$ is not even retract rational. Note that such $A$ are non-reduced.
	
	In \Cref{grotring} we give an application of \Cref{periodicity} to the Grothendieck ring of $k$-stacks $K_0(\on{Stacks}_k)$.
	
	
	\section{Preliminaries}\label{preliminaries}
	Let $G$ be a linear algebraic group over $k$. If $V$ is a generically free representation of $G$, there exists a non-empty open $G$-invariant subset $U\c V$ together with a $G$-torsor $U\to Z$, where $Z$ is a $k$-variety. 
	Such a torsor $U\to Z$ is versal.
	We say that $BG$ is stably birational to a variety $X$ if $Z$ is birational to $X$. We say that $BG$ is stably rational or retract rational if $Z$ is. Different choices of $V$ and $U$ yield stably birational $Z$, hence the definitions are independent of the choice of $V$ and $U$; see \cite[\S 5]{merkurjev2018versal}.
	
	Let $T$ be a torus over $k$. We will denote by $\hat{T}$ the character lattice of $T$. Recall that this is $\on{Hom}_{k_s}(T_{k_s},\G_{m,k_s})$, where $k_s$ denotes a separable closure of $k$. The association $T\mapsto \hat{T}$ establishes an anti-equivalence between the categories of $k$-tori and the category of $\Z$-free continuous $\on{Gal}(k)$-modules of finite rank; see \cite[\S 3.4]{voskresenskii2011algebraic}. We will write $T'$ for the dual torus of $T$, that is, the $k$-torus whose character is dual to $\hat{T}$.
	
	Consider a short exact sequence \[1\to T\to S\xrightarrow{\phi} Q\to 1\] where $S$ is quasi-split. Then $S$ is an open subset of an affine space $V$, and the multiplication action of $T$ on $S$ extends to $V$. It follows that $\phi$ is a standard $T$-torsor, in the sense of \cite{merkurjev2018versal}. In particular, $BT$ is stably birational to $Q$.
	
	The generic fiber of $\phi$ is a $T$-torsor over $k(S)$, called the \emph{generic} $T$-torsor. If $E\to \Spec K$ is a $T$-torsor, its \emph{period} is its order in the group $H^1(K,T)$. We denote by $e(T)$ the period of a generic $T$-torsor. By \cite[Proposition 1.1]{merkur2010periods}, $e(T)$ is divisible by the period of every $T$-torsor. In particular, it does not depend on the choice of the resolution (\ref{resolution}). 
	
	\begin{lemma}\label{period}
		Let $T$ be a $k$-torus, with splitting group $G$. Then:
		\begin{enumerate}[label=(\alph*)]
			\item\label{period1} $e(T)\mid |G|$;
			\item\label{period2} if $T'$ denotes the dual torus of $T$, then $e(T)=e(T')$.
		\end{enumerate}
	\end{lemma}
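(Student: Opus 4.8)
For part \ref{period1}, let $L/k$ be a finite Galois extension with $\on{Gal}(L/k)=G$ splitting $T$, and let $K$ denote the base field of the generic $T$-torsor, so that $e(T)$ is by definition the order of its class $\alpha\in H^1(K,T)$. The plan is a restriction--corestriction argument to a splitting field. Over the \'etale $K$-algebra $L\otimes_k K$ the torus $T$ becomes split, so its $H^1$ vanishes by Hilbert's Theorem 90; passing to a field factor $M$ of $L\otimes_k K$, the torus $T$ is split by the finite separable extension $M/K$, whose degree divides $|G|$, and $\on{res}_{M/K}(\alpha)=0$. Since $\on{cor}_{M/K}\circ\on{res}_{M/K}$ is multiplication by $[M:K]$ on $H^1(K,T)$, the class $\alpha$ is annihilated by $[M:K]$, hence by $|G|$. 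Therefore $e(T)=\on{ord}(\alpha)$ divides $|G|$.

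For part \ref{period2}, I would pass through the cohomology of the character lattice. Recall the perfect pairing of $G$-lattices $\hat T\otimes_{\Z}\widehat{T'}\to\Z$ induced by $\widehat{T'}=\on{Hom}_{\Z}(\hat T,\Z)$. By Tate duality for the finite group $G$, the cup products $\hat H^i(G,\hat T)\otimes\hat H^{-i}(G,\widehat{T'})\to\hat H^0(G,\Z)=\Z/|G|\Z$ are perfect pairings of finite abelian groups for every $i$; in particular $\hat H^i(G,\widehat{T'})\cong\hat H^{-i}(G,\hat T)^{\vee}$, so these two groups have the same exponent. I would combine this with Merkurjev's computation of the generic period \cite{merkur2010periods}, which expresses $e(T)$ in terms of the Tate cohomology of $\hat T$ as a $G$-module. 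Replacing $\hat T$ by $\widehat{T'}=\hat T^{\vee}$ then reverses the cohomological degree, $\hat H^i(G,\hat T)\rightsquigarrow\hat H^i(G,\widehat{T'})\cong\hat H^{-i}(G,\hat T)^{\vee}$; since this reversal fixes the degree $i=0$ and interchanges $i$ with $-i$, the multiset of exponents $\{\exp\hat H^i(G,\hat T)\}_i$ coincides with $\{\exp\hat H^i(G,\widehat{T'})\}_i$, and the desired equality $e(T)=e(T')$ follows provided $e(T)$ is a symmetric function of this multiset.

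The main obstacle is precisely that last provision: the period is \emph{not} the exponent of a single Tate cohomology group in a fixed degree, so one cannot simply quote a one-line formula. Already for $T$ the norm-one torus of a biquadratic extension one checks that $\exp\hat H^1(G,\hat T)$ and $\exp\hat H^{-1}(G,\hat T)$ differ, whereas $e(T)=e(T')$ must still hold; thus a naive degree-$1$ (or degree-$(-1)$) formula would violate the duality. The crux is therefore to extract from \cite{merkur2010periods} the exact, degree-symmetric description of $e(T)$, and to verify in particular that the degree-$0$ contribution does not enter—note that $\hat H^0(G,\hat T)$ can be nonzero even when $T$ is split, so it must be excluded—so that only the self-dual collection of cohomology groups in nonzero degrees governs $e(T)$. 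Once this symmetry is established, the perfect Tate pairing above gives $e(T)=e(T')$ immediately.
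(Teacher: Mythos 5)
Part \ref{period1} of your proposal is correct and is essentially the paper's own argument: you restrict the class of the generic torsor to a field factor $M$ of $L\otimes_kK$ (the paper uses the compositum of $K$ with the splitting field, which is the same thing), kill it there by Hilbert 90 for the split torus, and conclude by restriction--corestriction that $e(T)\mid [M:K]\mid |G|$. Nothing to add.

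Part \ref{period2}, however, has a genuine gap, and it is the one you flag yourself but do not close. Your route needs a description of $e(T)$ as a \emph{symmetric} function of the multiset of exponents $\exp\hat{H}^i(G,\hat{T})$, $i\in\Z$; you never produce one, and the reference you lean on does not contain one. What \cite{merkur2010periods} actually proves (Theorems 3.1 and 3.2, which are exactly what the paper cites) is that $e(T)$ is the \emph{order of the extension class} $\alpha\in\on{Ext}^1_G(\hat{T},\hat{Q})$ of a coflasque resolution $0\to\hat{Q}\to\hat{S}\to\hat{T}\to 0$, and that $e(T')$ is the order of the class of the dual (flasque) resolution $0\to\widehat{T'}\to\hat{S}'\to\hat{Q}'\to 0$. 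The order of an Ext class is not given anywhere as a function of the abstract isomorphism types of the groups $\hat{H}^i(G,\hat{T})$, so Tate duality between $\hat{H}^i(G,\hat{T})$ and $\hat{H}^{-i}(G,\widehat{T'})$, correct as it is, has no invariant to act on; indeed your own biquadratic example shows that the naive fixed-degree candidates fail. The clause ``provided $e(T)$ is a symmetric function of this multiset'' is therefore not a technical proviso: it is the entire unproved content of the lemma.

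For comparison, the paper closes this by never leaving the level of extensions: transposition gives a $G$-equivariant isomorphism $\on{Hom}_\Z(M,N)\cong\on{Hom}_\Z(N',M')$, hence, via the identification $\on{Ext}^1_G\cong H^1(G,\on{Hom}_\Z(-,-))$ for lattices, a group isomorphism $\on{Ext}^1_G(M,N)\cong\on{Ext}^1_G(N',M')$ carrying the class of an extension to the class of its dual. Applied to $M=\hat{T}$, $N=\hat{Q}$, this matches $\alpha$ with the class $\alpha'$ of the dual resolution, and a group isomorphism preserves orders, so $e(T)=e(T')$. If you want the ``manifestly self-dual'' invariant you were hunting for, it exists but it is not a cohomological exponent: by the boundary-map computation (as in \Cref{multiple}), $n\alpha=0$ if and only if $n\cdot\on{id}_{\hat{T}}$ lifts along $\hat{S}\to\hat{T}$, and, since $\hat{Q}$ is coflasque, this holds if and only if $n\cdot\on{id}_{\hat{T}}$ factors through \emph{some} permutation $G$-lattice. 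Thus $e(T)$ is the least such $n$, and transposing a factorization $\hat{T}\to P\to\hat{T}$ shows at once that this $n$ is the same for $\hat{T}$ and its dual lattice.
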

	
	\begin{proof}
		\ref{period1}. Let $l\supseteq k$ be the splitting field of $T$. The extension $l/k$ is Galois, with Galois group $G$. Let $E\to\Spec K$ be a generic $T$-torsor, where $K$ is a field containing $k$, and denote by $L$ the compositum of $K$ and $l$ inside some fixed algebraic closure of $K$ containing $l$. Then $L$ splits $T_K$ and so $H^1(L,T_L)=H^1(L,\G_m^{\on{rk}T})=0$. The restriction-corestriction sequence \[H^1(K,T)\to H^1(L,T_L)\to H^1(K,T)\] shows that $H^1(K,T)$ is $[L:K]$-torsion, hence $e(T)\mid [L:K]$. By basic Galois theory $[L:K]\mid [l:k]=|G|$, so $e(T)\mid |G|$, as desired.
		
		\ref{period2}. Set $M:=\hat{T}$ and fix a coflasque resolution $0\to N\to U \to M\to 0$, corresponding to a class $\alpha\in \on{Ext}^1_G(M,N)$. Let $M'$, $N'$ be the dual lattices of $M$ and $N$, respectively. There is a $G$-equivariant isomorphism \[\on{Hom}_{\Z}(M,N)\to \on{Hom}_{\Z}(N',M')\] given by taking the transpose. By \cite[\S III, Proposition 2.2]{brown1994cohomology} we obtain a commutative diagram of group isomorphisms
		\begin{equation}\label{dualorder}\begin{tikzcd}
		H^1(G,\on{Hom}_{\Z}(M,N)) \arrow[r,"\sim"] \arrow[d,"\wr"] & \on{Ext}^1_G(M,N) \arrow[d,"\eta"]\\
		H^1(G,\on{Hom}_{\Z}(N',M')) \arrow[r,"\sim"] & \on{Ext}^1_G(N',M'),
		\end{tikzcd}\end{equation}
		where $\eta$ is defined by sending an extension $0\to N\to P\to M\to 0$ to its dual $0\to M'\to P'\to N'\to 0$.  By definition, $e(T)$ is the order of $\alpha$ in $\on{Ext}^1_G(M,N)$. By \cite[Theorem 3.2]{merkur2010periods}, $e(T')$ is equal to the order of the class $\alpha'$ of the dual sequence $0\to M'\to U'\to N'\to 0$ in $\on{Ext}^1_G(N',M')$. Now (\ref{dualorder}) shows that $\alpha$ and $\alpha'$ have the same order, hence $e(T)=e(T')$. 
	\end{proof}
	
	\section{Proof of Theorem \ref{periodicity}}\label{noethertorsion} 
	
	Let $T$ be a torus over $k$. Fix a coflasque resolution of $T$ \begin{equation}\label{resolution}1\to T\to S\xrightarrow{\phi} Q\to 1\end{equation} and the corresponding lattice sequence \begin{equation}\label{resolution'}0\to \hat{Q}\to \hat{S}\to\hat{T}\to 0.\end{equation} Consider the following commutative diagram with exact rows and columns:
	\begin{equation}\label{noethertorsion1}	
	\begin{tikzcd}
	&&& 1\arrow[d]\\
	& 1 \arrow[d] && T \arrow[d]\\ 
	1 \arrow[r] & T[d] \arrow[r] \arrow[d] & S \arrow[r,"\psi"] \arrow[equal]{d} & T_d \arrow[r] \arrow[d] & 1\\
	1 \arrow[r] & T \arrow[r] \arrow[d,"d"] & S \arrow[r,"\phi"] & Q \arrow[r] \arrow[d] & 1\\
	& T \arrow[d] && 1\\
	& 1
	\end{tikzcd}
	\end{equation}	
	Note that the second row is (\ref{resolution}). The torus $T_d$ is defined so as to make the first row exact, and the copy of $T$ in the upper right corner is identified with the copy of $T$ on the lower left corner via the connecting homomorphism given by the snake lemma.
	
	The main ingredient for the proof of \Cref{periodicity} is the following observation.
	
	\begin{prop}\label{multiple}
		Let $\alpha\in \on{Ext}^1_G(\hat{T},\hat{Q})$ be the class of (\ref{resolution'}), and $\alpha_d$ the class of the sequence \[0\to \hat{Q}\to\hat{T}_d\to\hat{T}\to 0\] in $\on{Ext}^1_G(\hat{T},\hat{Q})$, for every $d\geq 1$. Then $\alpha_d=d\cdot\alpha$. 
		
		In particular, $T_m\cong T_n$ when $m\equiv \pm n\pmod{e(T)}$, and if $e(T)\mid d$, then $T_d\cong T\times Q$.
	\end{prop}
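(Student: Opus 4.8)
The plan is to compute the class $\alpha_d$ directly from the defining diagram~(\ref{noethertorsion1}) by dualizing it into a diagram of character lattices, and then to recognize the resulting extension as a Baer sum / pushout-pullback expression that equals $d\cdot\alpha$. First I would apply the character functor $\hat{(-)}$ to~(\ref{noethertorsion1}). Since this functor is an exact anti-equivalence, the two exact rows become the short exact sequences $0\to\hat Q\to\hat S\to\hat T\to 0$ (representing $\alpha$) and $0\to\hat Q\to\hat T_d\to\hat T\to 0$ (representing $\alpha_d$), while the left-hand vertical column $T\xrightarrow{d}T\to T[d]$ dualizes to multiplication by $d$ on $\hat T$, and the equality $S=S$ in the middle dualizes to the identity on $\hat S$. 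The snake-lemma identification in the diagram becomes, on characters, the statement that the two sequences fit into a commutative ladder whose left vertical map is $d\colon\hat T\to\hat T$ (up to the sign coming from the connecting map) and whose middle vertical map is the identity on $\hat S$.

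Concretely, I would read off from the dualized diagram that $0\to\hat Q\to\hat T_d\to\hat T\to 0$ is obtained from $0\to\hat Q\to\hat S\to\hat T\to 0$ by pulling back along the map $d\colon\hat T\to\hat T$. The key computational fact is the standard one that, in $\on{Ext}^1_G(\hat T,\hat Q)$, pulling back an extension along multiplication by $d$ on the quotient term $\hat T$ multiplies its class by $d$; equivalently $\alpha\circ[d]=d\cdot\alpha$ where the composition denotes the functoriality of $\on{Ext}^1$ in its first variable and $[d]$ acts as $d$ in $\on{End}(\hat T)$. Thus $\alpha_d=d\cdot\alpha$, which is the main assertion. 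I would make sure to track the sign convention: the snake connecting homomorphism may introduce a sign, but since $d\cdot\alpha$ and $(-d)\cdot\alpha$ generate the same subgroup and the subsequent statements are stated up to $\pm$, this is harmless (and indeed is what produces the $\pm$ in the periodicity statement).

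The two consequences then follow formally. Since $\alpha_d=d\cdot\alpha$ and $\alpha$ has order $e(T)$ in $\on{Ext}^1_G(\hat T,\hat Q)$ by the definition of the period, the classes $\alpha_m$ and $\alpha_n$ coincide whenever $m\equiv\pm n\pmod{e(T)}$; as equality of $\on{Ext}$-classes is equivalent to congruence of the middle extensions, this yields $\hat T_m\cong\hat T_n$ as $G$-lattices and hence $T_m\cong T_n$ by the anti-equivalence. Similarly, if $e(T)\mid d$ then $\alpha_d=0$, so the sequence $0\to\hat Q\to\hat T_d\to\hat T\to 0$ splits, giving $\hat T_d\cong\hat Q\oplus\hat T$ and therefore $T_d\cong Q\times T$.

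The step I expect to be the main obstacle is the careful bookkeeping in translating the snake-lemma identification from~(\ref{noethertorsion1}) into the correct map of $\on{Ext}$-classes on characters, in particular verifying that the relevant vertical character map really is multiplication by $d$ on $\hat T$ (and not, say, on $\hat Q$) and pinning down the sign. Once the dualized diagram is laid out, the identity $\alpha_d=d\cdot\alpha$ is a formal property of $\on{Ext}^1$, so the real work is entirely in setting up that diagram correctly.
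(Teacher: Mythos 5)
Your plan is essentially the paper's own proof. The paper likewise dualizes (\ref{noethertorsion1}) into the lattice ladder (\ref{noethertorsion3}), with the identity on $\hat{Q}$ and multiplication by $d$ on $\hat{T}$ --- exactly your pullback description --- including the same pointwise snake-lemma verification that the induced map on kernels is $t\mapsto t^d$. The only cosmetic difference is that where you cite the standard functoriality of $\on{Ext}^1$ in the first variable (pullback along $d$ multiplies the class by $d$), the paper re-derives this from the naturality of the connecting maps in the two $\on{Hom}_G(\hat{T},-)$ long exact sequences, reading off $\alpha_d$ and $\alpha$ as images of $\on{id}_{\hat{T}}$ under the respective boundary maps. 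So for the main identity $\alpha_d=d\cdot\alpha$ you are on exactly the paper's track.

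There is one genuine (though small) slip in your deduction of the consequences: when $m\equiv -n\pmod{e(T)}$ the classes $\alpha_m$ and $\alpha_n$ do \emph{not} coincide --- they are negatives of each other --- so your appeal to ``equality of Ext-classes is equivalent to congruence of the middle extensions'' does not apply in that case, and the $\pm$ in the statement is not an artifact of a sign ambiguity in the snake map, as your second paragraph suggests. What is needed is the additional (easy) observation the paper makes explicitly: if $\gamma$ is the class of $0\to N\to P\xrightarrow{\eta} M\to 0$, then $-\gamma$ is the class of $0\to N\to P\xrightarrow{-\eta} M\to 0$, with the \emph{same} middle lattice $P$; hence $\alpha_m=-\alpha_n$ still forces $\hat{T}_m\cong\hat{T}_n$ and so $T_m\cong T_n$. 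With that one line inserted your argument is complete; the case $e(T)\mid d$ (where $\alpha_d=0$, the sequence splits, and $\hat{T}_d\cong\hat{Q}\oplus\hat{T}$) is fine as written.
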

	
	\begin{proof}
		Consider the diagram (\ref{noethertorsion1}). Since $S$ is quasi-split, $S\to T_d$ is a generic $T[d]$-torsor, so $BT[d]$ is stably birational to $T_d$. From the last column and the second row of (\ref{noethertorsion1}) we obtain the following diagram with exact rows:
		\begin{equation}\label{noethertorsion2}
		\begin{tikzcd}
		1 \arrow[r] & T \arrow[r] \arrow[d,"d"] & S \arrow[r,"\phi"] \arrow[d,"\psi"] & Q \arrow[r] \arrow[equal]{d} & 1\\
		1 \arrow[r] & T \arrow[r] & T_d \arrow[r] & Q \arrow[r] & 1.
		\end{tikzcd}
		\end{equation}
		The commutative square on the right comes directly from (\ref{noethertorsion1}). That the map $T\to T$ is given by $t\mapsto t^d$ follows from our previous identification of the two copies of $T$ via the connecting map of the snake lemma: if $t\in T(\cl{k})$, then, viewing $T(\cl{k})\c T_d(\cl{k})$, there exists $s\in S(\cl{k})$ such that $\psi(s)=t$, and since $\phi(s)=1$ we see that $s\in T(\cl{k})$. The connecting homomorphism is defined by sending $s\mapsto s^d$, but thanks to our identification it is the identity, so $s^d=t$. In terms of lattices, (\ref{noethertorsion2}) yields
		\begin{equation}\label{noethertorsion3}
		\begin{tikzcd}
		0 \arrow[r] & \hat{Q} \arrow[r] \arrow[equal]{d} & \hat{T}_d \arrow[r] \arrow[d] & \hat{T} \arrow[r] \arrow[d,"d"] & 0\\
		0 \arrow[r] & \hat{Q} \arrow[r] & \hat{S} \arrow[r] & \hat{T} \arrow[r] & 0.
		\end{tikzcd}
		\end{equation}
		By \cite[Theorem 3.1]{merkur2010periods}, $e(T)$ equals the order of the class $\alpha\in\on{Ext}^1_G(\hat{T},\hat{Q})$ of the sequence \begin{equation}
		\label{reductionlattice}0\to \hat{Q}\to \hat{S}\to \hat{T}\to 0.\end{equation} 
		By definition, $\alpha_d\in\on{Ext}^1_G(\hat{T},\hat{Q})$ is the class of \begin{equation}\label{classn}0\to \hat{Q}\to \hat{T}_d\to \hat{T}\to 0.\end{equation}
		The long exact sequence for the functor $\on{Hom}_G(\hat{T},-)$ associated to (\ref{noethertorsion3}) reads 
		\[
		\begin{tikzcd}
		0 \arrow[r] & \on{Hom}_G(\hat{T},\hat{Q}) \arrow[r] \arrow[equal]{d} & \on{Hom}_G(\hat{T},\hat{T}_d) \arrow[r] \arrow[d] & \on{Hom}_G(\hat{T},\hat{T}) \arrow[r,"\partial_1"] \arrow[d,"d"] & \on{Ext}^1_G(\hat{T},\hat{Q}) \arrow[equal]{d}\\
		0 \arrow[r] & \on{Hom}_G(\hat{T},\hat{Q}) \arrow[r] & \on{Hom}_G(\hat{T},\hat{S}) \arrow[r] & \on{Hom}_G(\hat{T},\hat{T}) \arrow[r,"\partial_2"] & \on{Ext}^1_G(\hat{T},\hat{Q})
		\end{tikzcd}
		\]
		The classes $\alpha$ and $\alpha_d$ are the images of $\on{id}_{\hat{T}}$ under the boundary maps $\partial_1$ and $\partial_2$, respectively. Since $\partial_1(\on{id}_{\hat{T}})=d\cdot \partial_1(\on{id}_{\hat{T}})$, we deduce that $\alpha_d=d\cdot\alpha$. 
		
		Since $\alpha$ has order $e(T)$, if $m\equiv n\pmod{e(T)}$ then $T_m\cong T_n$. Recall that if $M,N$ are $G$-lattices and $\gamma\in \on{Ext}_G^{1}(M,N)$ is the class of \[0\to N\to P\xrightarrow{\eta} M\to 0,\] then $-\gamma$ is the class of \[0\to N\to P\xrightarrow{-\eta} M\to 0.\] Hence $T_m\cong T_n$ when $m\equiv \pm n\pmod{e(T)}$.
		Finally, if $e(T)\mid d$, then $\alpha_d=0$, so $\hat{T}_d\cong \hat{T}\oplus \hat{Q}$.
	\end{proof}
	
	Before proving \Cref{periodicity}, we need the following lemma.
	
	\begin{lemma}\label{surjretract}
		Let $T_1$ and $T_2$ be two $k$-tori split by a finite Galois extension with Galois group $G$. Let $f: T_1\to T_2$ be a surjective homomorphism of tori of finite degree prime to $|G|$. Then $T_1$ is retract rational if and only if $T_2$ is retract rational.
	\end{lemma}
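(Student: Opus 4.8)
The plan is to translate the statement into the language of character lattices and then to exploit the hypothesis $\gcd(\deg f,|G|)=1$, which will make $\hat{T}_1$ and $\hat{T}_2$ indistinguishable after localizing at every prime dividing $|G|$. Set $M_i:=\hat{T}_i$, a $G$-lattice, $G$ being the given Galois group splitting both tori. Since $f$ is surjective with finite kernel, dualizing gives an injection of $G$-lattices $\hat f\colon M_2\to M_1$ whose cokernel $C:=M_1/\hat f(M_2)$ is finite of order $n:=\deg f$. The crucial elementary point is that for every prime $p\mid |G|$ we have $p\nmid n$, so $C\otimes_{\Z}\Z_{(p)}=0$; exactness of localization then shows that $\hat f\otimes\Z_{(p)}\colon M_2\otimes\Z_{(p)}\to M_1\otimes\Z_{(p)}$ is an isomorphism of $\Z_{(p)}[G]$-modules, for each $p\mid |G|$.

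Next I would bring in the standard criterion of Colliot-Th\'el\`ene--Sansuc and Saltman: choosing a flasque resolution $0\to M_i\to P_i\to F_i\to 0$ with $P_i$ permutation and $F_i$ flasque, the torus $T_i$ is retract rational if and only if $F_i$ is invertible, i.e.\ a direct summand of a permutation $G$-lattice. I would then localize each resolution at a prime $p$. Localization is exact, carries permutation lattices to permutation $\Z_{(p)}[G]$-lattices, and preserves the flasque property (Tate cohomology of a lattice is finite and commutes with localization), so $0\to M_i\otimes\Z_{(p)}\to P_i\otimes\Z_{(p)}\to F_i\otimes\Z_{(p)}\to 0$ is a flasque resolution over $\Z_{(p)}[G]$. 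By uniqueness of flasque resolutions up to permutation summands — a Schanuel-type argument that is equally valid over $\Z_{(p)}[G]$ — the isomorphism $M_1\otimes\Z_{(p)}\cong M_2\otimes\Z_{(p)}$ obtained above forces $F_1\otimes\Z_{(p)}$ and $F_2\otimes\Z_{(p)}$ to coincide up to permutation summands; in particular $F_1\otimes\Z_{(p)}$ is invertible over $\Z_{(p)}[G]$ if and only if $F_2\otimes\Z_{(p)}$ is.

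To conclude I would invoke the local--global principle for invertibility (Endo--Miyata): a flasque $G$-lattice $F$ is invertible if and only if $F\otimes\Z_{(p)}$ is invertible over $\Z_{(p)}[G]$ for every prime $p$. For $p\nmid |G|$ the ring $\Z_{(p)}[G]$ is a maximal order, so every lattice is projective and hence a summand of a free, i.e.\ permutation, module; the condition is automatic and only the primes $p\mid|G|$ are relevant. For those primes the previous paragraph supplies the equivalence, so $F_1$ is invertible if and only if $F_2$ is, that is, $T_1$ is retract rational if and only if $T_2$ is. I expect the main obstacle to be precisely this last step: justifying that invertibility of the flasque part may be tested one prime at a time, and that only the primes dividing $|G|$ impose a constraint. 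This is exactly where the coprimality hypothesis is consumed, since it guarantees the local comparison of $M_1$ and $M_2$ at all primes $p\mid|G|$; the remaining bookkeeping — that the flasque class is well defined modulo permutation summands and that invertibility is inherited by summands — is routine.
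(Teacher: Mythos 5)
Your proposal is correct and takes essentially the same route as the paper: reduce to invertibility of the flasque parts via Saltman's criterion, use coprimality of $\deg f$ and $|G|$ to identify $(\hat{T}_1)_{(p)}\cong(\hat{T}_2)_{(p)}$ at every prime $p\mid |G|$, compare the localized flasque resolutions up to permutation summands (your Schanuel-type uniqueness lemma is exactly the paper's explicit pushout diagram together with the vanishing of $\on{Ext}^1$ of a flasque lattice against a permutation lattice), and conclude by a local-to-global principle for invertibility, for which the paper cites Curtis--Reiner's genus theory where you cite Endo--Miyata. The only differences are which standard lemmas are cited rather than proved inline, so there is no gap.
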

	
	\begin{proof}
		Let $1\to F_i\to S_i\to T_i\to 1$ be a flasque resolution of ${T}_i$ for $i=1,2$. By \cite[Theorem 3.14(a)]{saltman1984retract}, $T_i$ is retract rational if and only if $\hat{F}_i$ is invertible, so it suffices to show that $\hat{F}_1$ is invertible if and only if $\hat{F}_2$ is.
		
		Set $a:=\on{deg}f$. If $p$ is a prime that does not divide $a$, the map $\phi:T_1\to T_2$ induces an isomorphism $(\hat{T}_1)_{(p)}\cong (\hat{T}_2)_{(p)}$ of $\Z_{(p)}[G]$-modules. Construct the pushout diagram 
		\begin{equation}
		\begin{tikzcd}
		& 0 & 0\\
		& (\hat{F}_2)_{(p)} \arrow[u] \arrow[r] & (\hat{F}_2)_{(p)} \arrow[u]\\ 
		0 \arrow[r] & (\hat{S}_2)_{(p)} \arrow[u] \arrow[r] & X \arrow[r] \arrow[u] & (\hat{F}_1)_{(p)} \arrow[r] & 0\\
		0 \arrow[r] & (\hat{T}_1)_{(p)} \arrow[u] \arrow[r] & (\hat{S}_1)_{(p)} \arrow[r] \arrow[u] & (\hat{F}_1)_{(p)} \arrow[r] \arrow[u] & 0 \\
		& 0 \arrow[u] & 0 \arrow[u] \\
		\end{tikzcd}
		\end{equation}	
		Since the $\hat{F}_i$ are flasque and the $\hat{S}_i$ are permutation for $i=1,2$, by \cite[Lemme 1(vii)]{colliot1977r} \[\on{Ext}^1_{\Z_{(p)}[G]}((\hat{F}_i)_{(p)},(\hat{S}_{3-i})_{(p)})=\on{Ext}^1_G(\hat{F}_i, \hat{S}_{3-i})_{(p)}=0.\] It follows from the diagram that $(F_1\oplus S_2)_{(p)}\cong X \cong (F_2\oplus S_1)_{(p)}$ for every prime not dividing $a$. Since $|G|$ and $a$ are coprime, this holds for every $p\mid |G|$. By \cite[31.3(ii)]{curtis1966representation}, it follows that $F_1$ is invertible if and only if $F_2$ is (for us $\Lambda=\Z[G]$ and so $S(\Lambda)$ is the set of prime divisors of $|G|$). 
	\end{proof}

	\begin{proof}[Proof of \Cref{periodicity}]
		\ref{periodicity1}.	By construction $BT[d]$ is stably birational to $T_d$ for every $d\geq 1$, so the conclusion follows from \Cref{multiple}.
		
		\ref{periodicity2}. If $e(T)\mid n$, by \Cref{multiple} the sequence \[1\to T\to T_n\to Q\to 1\] splits, so $T_n\cong T\times Q$. Since $BT$ is stably birational to $Q$ and $BT[n]$ is stably birational to $T_n$, we conclude that $BT[n]$ is stably birational to $T\times BT$. 
		
		\ref{periodicity5}. Since $T[1]$ is trivial, $T_1=S$ is rational. Now \ref{periodicity5} follows from \ref{periodicity1}.
		
		\ref{periodicity4}. Let $m,n\geq 1$ such that $\ang{m}=\ang{n}$ in $\Z/e(T)\Z$. There exists $a\geq 1$ invertible modulo $e(T)$ such that $m\equiv an \pmod{e(T)}$. By \Cref{multiple}, ${T}_{m}\cong {T}_{an}$. By \Cref{surjretract}, $T_{m}$ is retract rational if and only if $T_{am}$ is. The second statement is now a consequence of \ref{periodicity2} and \ref{periodicity5}.
	\end{proof}
	
	\begin{rmk}
		Using \Cref{period}\ref{period1}, we see that \Cref{periodicity} remains valid if we substitute $e(T)$ with $|G|$.
	\end{rmk}
	
	\section{Norm one tori}
	Let $T:=R_{L/k}^{(1)}(\G_m)$ and $L/k$ be a finite Galois extension with Galois group $G$. By \cite[Example 4.1]{merkur2010periods}, $e(T')=[L:k]=|G|$. Using \Cref{period}\ref{period2}, we deduce $e(T)=|G|$. 
	
	Define $T_d$ via the diagram (\ref{noethertorsion1}) where $S=R_{L/k}(\G_m)$, and $T$ is embedded in $S$ as the kernel of the norm map, so that $Q=\G_m$. By construction we have a short exact sequence \[1\to T[d]\to R_{L/k}(\G_m)\to T_d\to 1.\] For every subgroup $G'$ of $G$, let $\sigma_{G'}:=\sum_{g\in G'}e_g\in \Z[G]$, where $\set{e_g}$ is the standard basis of $\Z[G]$. We have $\hat{T}=\Z[G]/\ang{\sigma_G}$, so the sequence \[1\to T[d]\to T\xrightarrow{d} T\to 1\] shows that the character module of $T[d]$ is $(\Z/d\Z[G])/\ang{\sigma}$. It follows that \[\hat{T}_d=\ang{d\Z[G],\sigma_G}=\ang{dg, \sigma_G}_{g\in G}.\] We may thus construct the exact sequence 
	\begin{equation}\label{norm1torsion0}0\to \Z\to \Z[G]\oplus \Z\to \hat{T}_d\to 0\end{equation} where the first map sends $1$ to $(1,\dots,1;-d)$ and the second map sends $(g,0)\mapsto dg$ and $(0,1)\mapsto \sigma_G$. 
	
	If $G$ is a finite group, $M$ a $G$-module and $i\in \Z$, we write $H^i(G,M)$ for the $i$-th group of Tate cohomology, group denoted by $\hat{H}^0(G,M)$ if $i$ is explicitly set equal to $0$.
	
	\begin{lemma}\label{finalisom}
		For every $i\in\Z$, there is a natural isomorphism \[{H}^i(G,\hat{T}_d)\cong {H}^i(G,\Z/d\Z).\]
	\end{lemma}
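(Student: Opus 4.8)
The plan is to compare the resolution (\ref{norm1torsion0}) with the standard $d$-torsion sequence $0\to\Z\xrightarrow{-d}\Z\to\Z/d\Z\to 0$ of trivial $G$-modules. The decisive structural fact is that the permutation module $\Z[G]$ occurring in the middle of (\ref{norm1torsion0}) is induced, and hence Tate-acyclic: $H^i(G,\Z[G])=0$ for every $i\in\Z$. Consequently the middle term $\Z[G]\oplus\Z$ has the same Tate cohomology as the trivial module $\Z$, and the long exact sequence attached to (\ref{norm1torsion0}) becomes formally identical to the one attached to the standard sequence. Making this identification canonical is what will produce the asserted natural isomorphism, rather than a mere abstract coincidence of subquotients.

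Concretely, writing $f$ and $g$ for the two maps in (\ref{norm1torsion0}), so that $f(1)=(\sigma_G,-d)$, $g(\gamma,0)=d\gamma$ and $g(0,1)=\sigma_G$, I would construct the morphism of short exact sequences of $G$-modules
\[
\begin{tikzcd}
0 \arrow[r] & \Z \arrow[r,"f"] \arrow[equal]{d} & \Z[G]\oplus\Z \arrow[r,"g"] \arrow[d,"\pi"] & \hat{T}_d \arrow[r] \arrow[d,"h"] & 0\\
0 \arrow[r] & \Z \arrow[r,"-d"] & \Z \arrow[r] & \Z/d\Z \arrow[r] & 0,
\end{tikzcd}
\]
in which the left vertical arrow is the identity, the middle one is the projection $\pi(x,n)=n$ onto the second summand, and $h$ is the induced map on cokernels. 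The left-hand square commutes because $\pi f=-d$ (as $f(1)=(\sigma_G,-d)$). The map $h$ exists precisely because $\ker g=\on{im}f=\Z\cdot(\sigma_G,-d)$ is carried by $\pi$ into $d\Z=\ker(\Z\to\Z/d\Z)$, so that $\pi$ composed with reduction factors uniquely through $g$; it is then automatically $G$-equivariant, all other arrows being so, and the right-hand square commutes by construction. Explicitly $h(d\gamma)=0$ and $h(\sigma_G)=1$, so $h$ records the $\sigma_G$-coefficient modulo $d$.

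Finally I would apply Tate cohomology to obtain a commutative ladder of long exact sequences. The identity and $\pi$ induce isomorphisms on $H^i(G,-)$ for every $i$ — the former trivially, the latter because the summand $\Z[G]$ contributes nothing in any degree — so the five lemma forces $h$ to induce an isomorphism $H^i(G,\hat{T}_d)\xrightarrow{\sim}H^i(G,\Z/d\Z)$ for all $i$, with naturality (compatibility with the connecting maps) built into the comparison of the two long exact sequences. The only points demanding care are the Tate-acyclicity of the induced module $\Z[G]$ and the kernel inclusion $\on{im}f\subseteq\ker(\pi\ \mathrm{mod}\ d)$ that guarantees $h$ is well defined; both are immediate once the explicit description of (\ref{norm1torsion0}) is unwound, so I anticipate no serious obstacle, the content being entirely in packaging the comparison correctly.
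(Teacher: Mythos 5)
Your proof is correct, but it takes a genuinely different route from the paper's. The paper never compares (\ref{norm1torsion0}) with the sequence $0\to\Z\xrightarrow{-d}\Z\to\Z/d\Z\to 0$; instead it performs a double dimension shift through the character module $U_d$ of $T[d]$, using the two short exact sequences $0\to\Z/d\Z\to\Z/d\Z[G]\to U_d\to 0$ (from the norm sequence for $\mu_d$) and $0\to\hat{T}_d\to\Z[G]\to U_d\to 0$ (from the first row of (\ref{noethertorsion1})), and Shapiro-type vanishing of $H^i(G,\Z[G])$ and $H^i(G,\Z/d\Z[G])$ to get $H^i(G,\hat{T}_d)\cong H^{i-1}(G,U_d)\cong H^i(G,\Z/d\Z)$. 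Your argument instead builds an explicit morphism of short exact sequences with middle map the projection $\pi(x,n)=n$, checks $\pi f=-d$ and $\pi(\on{im}f)\subseteq d\Z$, and invokes the five lemma together with Tate-acyclicity of the induced module $\Z[G]$; all of these verifications are sound (including the $G$-equivariance of $\pi$, which holds because $G$ acts trivially on the second summand, and the doubly infinite five-lemma ladder, which is legitimate for Tate cohomology of a finite group). What your version buys is an \emph{explicit} realization of the isomorphism as $h_*$ for the concrete $G$-map $h:\hat{T}_d\to\Z/d\Z$ recording the $\sigma_G$-coefficient modulo $d$; since $\Z[G]$ restricted to any subgroup $\ang{g}$ is still induced, the same five-lemma argument shows $h_*$ is an isomorphism on $H^i(\ang{g},-)$ as well, so the commutativity of the square (\ref{shadiag}) needed in \Cref{sha} becomes completely transparent (restriction commutes with a map induced by a fixed morphism of $G$-modules), whereas in the paper's proof naturality must be tracked through a composition of two connecting homomorphisms. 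What the paper's route buys is the intermediate identification $H^i(G,\hat{T}_d)\cong H^{i-1}(G,U_d)$ involving the character module of $T[d]$ itself, which ties the computation to the group scheme $T[d]$ rather than to the particular presentation (\ref{norm1torsion0}). Either proof is acceptable as written.
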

	
	\begin{proof}
		Let $U_d$ be the character module of $T[d]$. The restriction of the norm exact sequence defining $R^{(1)}_{L/k}(\G_m)$ \[1\to T[d]\to R_{L/k}(\mu_d)\to \mu_d\to 1\] yields the exact sequence of $G$-modules \[0\to\Z/d\Z\to \Z/d\Z[G]\to U_d\to 0.\] We also have the sequence \[0\to\hat{T}_d\to\Z[G]\to U_d\to 0,\] corresponding to the first row of (\ref{noethertorsion1}). By Shapiro's lemma $H^i(G,\Z[G])=0$ and $H^{i}(G,\Z/d\Z[G])=0$. Looking at the associated cohomology long exact sequences, we deduce \[H^i(G,\hat{T}_d)\cong H^{i-1}(G,U_d)\cong H^i(G,\Z/d\Z).\qedhere\]
	\end{proof}
	
	Recall that if $M$ is a $G$-module and $i\in\Z$, then \[\Sh^i(G,M):=\on{ker}(H^i(G,M)\to \oplus_{g\in G}H^i(\ang{g},M)).\] If $M$ is a $G$-lattice, by \cite[Proposition 2.9.2(a)]{lorenz2006multiplicative} \begin{equation}\label{shah1}\Sh^2(G,M)\cong H^1(G,[M]^{fl}).\end{equation}
	\begin{lemma}\label{sha}
		Let $G$ act trivially on $\Z/d\Z$, and assume that $\Sh^2(G,\Z/d\Z)\neq 0$. Then $BT[d]$ is not retract rational.
	\end{lemma}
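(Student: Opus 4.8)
The plan is to reduce the retract rationality of $BT[d]$ to an invertibility question for the flasque part of $\hat{T}_d$, and then to obstruct that invertibility using the hypothesis $\Sh^2(G,\Z/d\Z)\neq 0$.

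First I would recall from the proof of \Cref{multiple} that $BT[d]$ is stably birational to $T_d$, so that $BT[d]$ is retract rational if and only if $T_d$ is. Fixing a flasque resolution $1\to F\to S\to T_d\to 1$, we have $\hat{F}=[\hat{T}_d]^{fl}$, and by \cite[Theorem 3.14(a)]{saltman1984retract} the torus $T_d$ is retract rational if and only if $\hat{F}$ is invertible. Thus it suffices to show that $[\hat{T}_d]^{fl}$ is \emph{not} invertible.

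Next I would transport the hypothesis from $\Z/d\Z$ to $\hat{T}_d$, showing that $\Sh^2(G,\hat{T}_d)\cong \Sh^2(G,\Z/d\Z)$ and hence $\Sh^2(G,\hat{T}_d)\neq 0$. The isomorphism $H^i(G,\hat{T}_d)\cong H^i(G,\Z/d\Z)$ of \Cref{finalisom} is a composite of connecting maps for the sequences $0\to\Z/d\Z\to\Z/d\Z[G]\to U_d\to 0$ and $0\to\hat{T}_d\to\Z[G]\to U_d\to 0$. Both middle terms are induced from the trivial subgroup, so they restrict to cohomologically trivial modules over every subgroup $H\leq G$; the identical connecting-map argument then yields the analogous isomorphism over each $H$. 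Since restriction commutes with connecting homomorphisms, these isomorphisms sit in squares that commute with the restriction maps to the cyclic subgroups $\ang{g}$, and therefore identify the kernels defining $\Sh^2$.

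Finally I would conclude as follows. By (\ref{shah1}) we have $\Sh^2(G,\hat{T}_d)\cong H^1(G,[\hat{T}_d]^{fl})$, so $H^1(G,[\hat{T}_d]^{fl})\neq 0$. But an invertible lattice is a direct summand of a permutation lattice $P$, and $H^1(G,P)=0$ by Shapiro's lemma, since each summand $\Z[G/H']$ satisfies $H^1(G,\Z[G/H'])\cong H^1(H',\Z)=\on{Hom}(H',\Z)=0$. Hence invertible lattices have vanishing first cohomology, so $[\hat{T}_d]^{fl}$ cannot be invertible, and $BT[d]$ is not retract rational. The hard part will be the naturality in the previous step: because $\Sh$ is defined through the restriction maps to cyclic subgroups, one must verify carefully that the isomorphism of \Cref{finalisom} commutes with restriction; once this is in hand, everything else is formal, using Saltman's criterion, (\ref{shah1}), and Shapiro's lemma.
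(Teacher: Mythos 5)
Your proposal is correct and takes essentially the same route as the paper: both transport the hypothesis $\Sh^2(G,\Z/d\Z)\neq 0$ to $\Sh^2(G,\hat{T}_d)$ via the naturality of \Cref{finalisom}, then use (\ref{shah1}) together with Saltman's criterion to conclude that $[\hat{T}_d]^{fl}$ is not invertible, hence $T_d$ and therefore $BT[d]$ is not retract rational. The only difference is that you spell out details the paper leaves implicit, namely the compatibility of the Shapiro-lemma isomorphisms with restriction to cyclic subgroups and the fact that invertible lattices have vanishing $H^1$.
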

	
	\begin{proof}
		By \Cref{finalisom} we have a commutative square with horizontal isomorphisms
		\begin{equation}\label{shadiag}
		\begin{tikzcd}
		{H}^2(G,\hat{T}_d) \arrow[r,"\sim"] \arrow[d] & H^2(G,\Z/d\Z) \arrow[d]\\
		\oplus_{g\in G}{H}^2(\ang{g},\hat{T}_d) \arrow[r,"\sim"] & \oplus_{g\in G}H^2(\ang{g},\Z/d\Z)
		\end{tikzcd}
		\end{equation}
		The diagram shows that $\Sh^2(G,\hat{T}_d)\cong \Sh^2(G,\Z/d\Z)$, thus $\Sh^2(G,\hat{T}_d)\neq 0$. By (\ref{shah1}) $H^1(G,[\hat{T}_d]^{fl})\neq 0$, hence $[\hat{T}_d]^{fl}$ is not invertible. It follows that $T_d$ is not retract rational, so $BT[d]$ is not retract rational.
	\end{proof}

	\begin{lemma}\label{subgroup}
		Let $G$ be a finite group, $H$ a subgroup of $G$ and $d\geq 1$ an integer. Let $L/k$ be a Galois extension with Galois group $G$, and let $k':=L^H$. Then $BR^{(1)}_{L/k}(\G_m)[d]\times_kk'$ is stably birational to $BR^{(1)}_{L/k'}(\G_m)[d]$ over $k'$.
		
		In particular, if $R^{(1)}_{L/k'}(\G_m)[d]$ is not retract rational over $k'$, then $R^{(1)}_{L/k}(\G_m)[d]$ is not retract rational over $k$.
	\end{lemma}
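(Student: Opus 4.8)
The plan is to reduce the assertion to the torus level and then to a single lattice isomorphism. By construction, $BR^{(1)}_{L/k}(\G_m)[d]$ is stably birational over $k$ to the torus $T_d$ sitting in $1\to T[d]\to R_{L/k}(\G_m)\to T_d\to 1$, with $\hat{T}_d=\ang{d\Z[G],\sigma_G}\c\Z[G]$. First I would base change this entire picture along $k\to k'$. The sequence becomes $1\to T[d]_{k'}\to R_{L/k}(\G_m)_{k'}\to (T_d)_{k'}\to 1$, and since $L\otimes_k k'\cong\prod_{gH\in G/H}L$ as $k'$-algebras (each factor $\cong L$ over $k'$), we get $R_{L/k}(\G_m)_{k'}\cong R_{L/k'}(\G_m)^{m}$ with $m:=[G:H]$, which is again quasi-split. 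This exhibits $(T_d)_{k'}$ as a model for $BR^{(1)}_{L/k}(\G_m)[d]\times_k k'$ over $k'$. On the other side, $BR^{(1)}_{L/k'}(\G_m)[d]$ is stably birational over $k'$ to the $k'$-torus $\widetilde{T}_d$ with $\hat{\widetilde{T}}_d=\ang{d\Z[H],\sigma_H}\c\Z[H]$, where $H=\on{Gal}(L/k')$. Thus it suffices to show that $(T_d)_{k'}$ and $\widetilde{T}_d$ are stably birational over $k'$.

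Under the anti-equivalence between tori and Galois lattices, base change corresponds to restricting the Galois action, so $\widehat{(T_d)_{k'}}=\on{Res}^G_H\hat{T}_d=\ang{d\Z[G],\sigma_G}$ viewed as a $\Z[H]$-module. The key step is the $H$-module isomorphism
\[\on{Res}^G_H\ang{d\Z[G],\sigma_G}\cong \ang{d\Z[H],\sigma_H}\oplus \Z[H]^{\,m-1}.\]
To obtain it, choose right coset representatives $G=\bigsqcup_{i=1}^{m}Hg_i$ with $g_1=1$, giving $\Z[G]=\bigoplus_i\Z[H]g_i$ as $H$-modules and $\sigma_G=\sum_i\sigma_H g_i$. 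After identifying each block $\Z[H]g_i$ with $\Z[H]$, an element of $\ang{d\Z[G],\sigma_G}$ has $i$-th coordinate of the form $dw_i+c\sigma_H$ for some $w_i\in\Z[H]$ and a single $c\in\Z/d\Z$ common to all $i$; equivalently, the reductions modulo $d\Z[H]$ of all coordinates equal the same multiple $c\bar{\sigma}_H$. The map $(u_1,\dots,u_m)\mapsto(u_1;\,u_2-u_1,\dots,u_m-u_1)$ is then an $H$-isomorphism onto $\ang{d\Z[H],\sigma_H}\oplus(d\Z[H])^{m-1}$, and multiplication by $d$ identifies $d\Z[H]\cong\Z[H]$. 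The one point requiring care here is precisely that the class $c$ is forced to be shared across all coordinates, so that only the $m-1$ differences $u_i-u_1$ are free and they automatically land in $d\Z[H]$; this is what produces the permutation summand and is the crux of the argument.

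Since $\Z[H]$ is the character lattice of the quasi-trivial, hence rational, torus $R_{L/k'}(\G_m)$, the displayed isomorphism yields $(T_d)_{k'}\cong\widetilde{T}_d\times R_{L/k'}(\G_m)^{m-1}$, so $(T_d)_{k'}$ and $\widetilde{T}_d$ are stably birational over $k'$. Chaining the three stable birational equivalences proves the first assertion. For the ``in particular'' statement I would argue by contraposition, using that retract rationality is a stable birational invariant and ascends under extension of the base field: if $R^{(1)}_{L/k}(\G_m)[d]$ were retract rational over $k$, equivalently $T_d$ were retract rational over $k$, then $(T_d)_{k'}$ would be retract rational over $k'$, and hence so would the stably birational torus $\widetilde{T}_d$, i.e.\ $R^{(1)}_{L/k'}(\G_m)[d]$ would be retract rational over $k'$, against the hypothesis. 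The remaining ingredients (base change of the quasi-split $R_{L/k}(\G_m)$ to a product of $R_{L/k'}(\G_m)$'s, and the two standard properties of retract rationality) are routine, so the lattice computation above is the only substantive point.
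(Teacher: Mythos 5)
Your proof is correct and takes essentially the same approach as the paper's: both reduce the lemma to the $H$-lattice isomorphism $M_{G,d}\cong \Z[H]^{[G:H]-1}\oplus M_{H,d}$ (in your notation, $\on{Res}^G_H\ang{d\Z[G],\sigma_G}\cong \Z[H]^{[G:H]-1}\oplus\ang{d\Z[H],\sigma_H}$), established by a telescoping change of coordinates relative to a set of coset representatives of $H$ in $G$, together with the observation that $R_{L/k}(\G_m)_{k'}$ stays quasi-split so that $(T_d)_{k'}$ remains a model for the base-changed classifying stack. The only difference is presentational: the paper realizes $\hat{T}_d$ as the quotient $(\Z[G]\oplus\Z)/\ang{(\sigma_G,-d)}$ and constructs an automorphism of $\Z[H]^{[G:H]}\oplus\Z$ carrying the relation vector into a single block, whereas you realize it as the sublattice $\ang{d\Z[G],\sigma_G}\subseteq\Z[G]$ and use the coordinate-difference map $(u_1,\dots,u_m)\mapsto(u_1;u_2-u_1,\dots,u_m-u_1)$ --- dual incarnations of the same argument.
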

	
	\begin{proof}
		For every finite group $\Gamma$, denote by $\set{e_{\gamma}}$ the standard basis for $\Z[\Gamma]$, let $\sigma_{\Gamma,d}=(\sum_{\gamma\in \Gamma}e_{\gamma},-d)\in \Z[\Gamma]\oplus\Z$, and set $M_{\Gamma,d}:=(\Z[\Gamma]\oplus \Z)/\ang{\sigma_{\Gamma,d}}$. If $K/k$ is a Galois extension with Galois group $\Gamma$ and $T:=R^{(1)}_{K/k}(\G_m)$, by (\ref{norm1torsion0}) $\hat{T}_d=M_{\Gamma,d}$. Therefore, to prove the lemma it is enough to exhibit an isomorphism of $H$-lattices \[M_{G,d}\cong \Z[H]^{r-1}\oplus M_{H,d},\]  where $r:=[G:H]$. The choice of a set of representatives of $G/H$ gives an isomorphism of $H$-lattices $\Z[G]\cong \Z[H]^r$, hence an isomorphism $\Z[G]\oplus\Z\cong \Z[H]^r\oplus\Z$ which sends $\sigma_G$ to $\tau:=(1,\dots,1,-d)$. It follows that $M_{G,d}\cong (\Z[H]^r\oplus\Z)/\ang{\tau}$. For every $i=1,\dots, r$, denote by $v_i:=(0,\dots,0,e_1,0,\dots,0)\in \Z[H]^r$, where $e_1\in \Z[H]$ is the basis vector corresponding to the identity $1\in H$, and $e_1$ appears in the $i$-th entry of $v_i$. Consider the automorphism \[f:\Z[H]^r\oplus\Z\to \Z[H]^r\oplus \Z\] determined by $f(v_1):=v_1$, $f(v_i):= v_i-v_{i-1}$ for $i=2,\dots,r$, and $f(0,1):=(0,1)$. 
		
		Then $f(\tau)=v_r+(0,-d)=(0,\dots,0,\sigma_{H,d})$, so $f$ induces an isomorphism $(\Z[H]^r\oplus\Z)/\ang{\tau}\cong (\Z[H]^r\oplus\Z)/\ang{f(\tau)}\cong \Z[H]^{r-1}\oplus M_{H,d}$. We conclude that $M_{G,d}\cong \Z[H]^{r-1}\oplus M_{H,d}$, as desired.
	\end{proof}
	
	The following group-theoretic lemma will be used in the proof of \Cref{classify}.
	
	\begin{lemma}\label{restrictive}
		Let $G$ be a finite $p$-group. 
		\begin{enumerate}[label=(\alph*)]
			\item\label{restrictive1} Assume that $G$ does not contain a proper subgroup isomorphic to $\Z/p\Z\times \Z/p\Z$. Then $G$ is cyclic, or $p=2$ and $G=Q_{2^n}$ for some $n\geq 3$.
			\item\label{restrictive2} Assume that every proper subgroup of $G$ is cyclic. Then either $G$ is cyclic or $\Z/p\Z\times \Z/p\Z$, or $p=2$ and $G=Q_8$.
			\item\label{restrictive3} Assume that $p=2$ and that every proper subgroup of $G$ is cyclic or dihedral. Then $G$ is cyclic, dihedral, or $|G|=8$. 
		\end{enumerate}
	\end{lemma}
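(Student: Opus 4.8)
The plan is to read part \ref{restrictive1} as the classical structure theorem for finite $p$-groups with a single subgroup of order $p$, and then to deduce parts \ref{restrictive2} and \ref{restrictive3} from it together with the standard list of $2$-groups that contain a cyclic subgroup of index $2$. The first thing I would record is that any two distinct subgroups of order $p$ in a $p$-group generate a subgroup containing a copy of $\Z/p\Z\times\Z/p\Z$ (if they commute this is clear; otherwise a central element of order $p$ produces the copy). Hence, apart from the immediate case $G=\Z/p\Z\times\Z/p\Z$, the hypothesis of \ref{restrictive1} is equivalent to $G$ having a unique subgroup of order $p$. I would then invoke --- or reprove by induction on $|G|$, using that the unique subgroup of order $p$ is central --- the classical fact that such a group is cyclic when $p$ is odd and cyclic or generalized quaternion $Q_{2^n}$ when $p=2$, which is exactly the assertion.

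For part \ref{restrictive2} I would separate the abelian and nonabelian cases. If $G$ is abelian, the structure theorem together with the requirement that the socle $\Z/p\Z\times\Z/p\Z$ not be a proper subgroup forces $G$ to be cyclic or $\Z/p\Z\times\Z/p\Z$. If $G$ is nonabelian, then no proper subgroup is isomorphic to $\Z/p\Z\times\Z/p\Z$ and $G$ is not itself $\Z/p\Z\times\Z/p\Z$, so \ref{restrictive1} applies and gives $p=2$ and $G\cong Q_{2^n}$; since $Q_{2^n}$ with $n\geq 4$ has the noncyclic proper subgroup $Q_{2^{n-1}}$, we get $n=3$, i.e. $G=Q_8$.

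For part \ref{restrictive3} I would argue directly, the cases $|G|\leq 8$ being the ``$|G|=8$'' clause; so assume $|G|=2^n$ with $n\geq 4$. As a subgroup of a cyclic or dihedral group is again cyclic or dihedral, it suffices to analyse the maximal subgroups, all of which are cyclic or dihedral. If some maximal subgroup is cyclic, then $G$ has a cyclic subgroup of index $2$, so $G$ lies in the standard list of such $2$-groups: besides $\Z/2^n\Z$ and $D_{2^n}$, the possibilities are $\Z/2^{n-1}\Z\times\Z/2\Z$, $Q_{2^n}$, the semidihedral group $SD_{2^n}$, and the modular group $M_{2^n}$. Each of these last four contains, for $n\geq 4$, a subgroup isomorphic to $\Z/4\Z\times\Z/2\Z$ or to $Q_8$ (for instance $\ang{a^2,b}$ in the modular and the split abelian case, and $Q_{2^{n-1}}\supseteq Q_8$ in the semidihedral and quaternion cases); as such a subgroup is neither cyclic nor dihedral, this contradicts the hypothesis, leaving $G$ cyclic or dihedral.

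The remaining point, which I expect to be the main obstacle, is to exclude the case in which \emph{every} maximal subgroup of $G$ is dihedral. Here I would fix a maximal $M\cong D_{2^{n-1}}$ and let $H\leq M$ be its cyclic subgroup of index $2$; for $n\geq 4$ this is the unique cyclic subgroup of index $2$ of $M$, hence characteristic in $M$ and so normal in $G$, cyclic of order $2^{n-2}\geq 4$ and of index $4$. I would then examine the order-$4$ quotient $G/H$. If $G/H\cong\Z/4\Z$, a generator lifts to some $g$ with $g^2\in M\setminus H$, so $g^2$ inverts $H$ and conjugation by $g$ is an automorphism $\phi$ of $H$ with $\phi^2=-1$; but in $\on{Aut}(\Z/2^{n-2}\Z)\cong(\Z/2^{n-2}\Z)^{\times}$ the element $-1$ is not a square, so no such $\phi$ exists. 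If $G/H\cong\Z/2\Z\times\Z/2\Z$, the three maximal subgroups containing $H$ cover $G$ and each has $H$ as its rotation subgroup, so every element outside $H$ inverts $H$; picking $g,h$ outside $H$ in distinct cosets, the product $gh$ again lies outside $H$ yet centralizes $H$, which is impossible since inversion is nontrivial on the cyclic group $H$ of order $\geq 4$. Both cases being excluded, some maximal subgroup is cyclic and the previous paragraph finishes the proof.
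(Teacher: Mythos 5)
Your proofs of parts (a) and (b) are essentially the paper's own: a central subgroup of order $p$ together with any second subgroup of order $p$ yields a copy of $\Z/p\Z\times\Z/p\Z$, the classical theorem on $p$-groups with a unique subgroup of order $p$ gives cyclic or generalized quaternion, and $Q_8\subset Q_{2^n}$ for $n\geq 4$ settles (b). (Like the paper's proof, your argument for (a) in fact produces the additional possibility $G\cong\Z/p\Z\times\Z/p\Z$, which the statement of (a) literally omits; both you and the paper handle this identically, and it is how the lemma is actually applied.) Part (c) is where you take a genuinely different route, and your argument is correct. The paper disposes of (c) with two citations: Miller's 1907 theorem, to conclude that $G$ is dihedral once it has a non-abelian proper subgroup, and R\'edei's classification of minimal non-abelian $p$-groups ($Q_8$, $A_{u,v}$, $B_{u,v}$) when all proper subgroups are abelian, followed by locating $\Z/4\Z\times\Z/2\Z$ inside $A_{u,v}$ and $B_{u,v}$. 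You instead split on the maximal subgroups: if one of them is cyclic, you invoke the standard classification of $2$-groups with a cyclic subgroup of index $2$ and eliminate everything except $\Z/2^n\Z$ and $D_{2^n}$ by exhibiting $\Z/4\Z\times\Z/2\Z$ or $Q_8$ (neither cyclic nor dihedral, and proper since $n\geq 4$) in each remaining group; if every maximal subgroup is dihedral, you derive a contradiction by hand, using that the rotation subgroup $H$ of one of them is its unique cyclic index-$2$ subgroup, hence characteristic there and normal in $G$, and then ruling out both possibilities for the order-$4$ quotient $G/H$ --- via the fact that $-1$ is not a square in $(\Z/2^m\Z)^{\times}$ for $m\geq 2$ in the cyclic case, and via the covering-by-three-maximal-subgroups argument (where the product of representatives of two distinct nontrivial cosets indeed stays outside $H$ and centralizes it) in the Klein-four case. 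Both sub-cases check out, including the uniqueness of the rotation subgroup, which needs the dihedral groups involved to have order at least $8$, guaranteed by $n\geq 4$. The trade-off: the paper's proof is shorter but leans on Miller and R\'edei; yours replaces them with the more commonly known cyclic-index-$2$ classification plus an elementary self-contained argument for the all-dihedral case.
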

	
	\begin{proof}
		
		\ref{restrictive1}. Assume that $G$ contains at least two distinct subgroups of order $p$. Since the center $Z(G)$ is not trivial, there exists a subgroup $H$ of $Z(G)$ order $p$. If $H'$ is another subgroup of order $p$, then $\ang{H,H'}\cong \Z/p\Z\times \Z/p\Z$. It follows from the assumption that $G=\Z/p\Z\times\Z/p\Z$. 
		
		Assume now that $G$ contains exactly one subgroup of order $p$. By \cite[Proposition 1.3]{berkovich2008groups}, $G$ is cyclic or $p=2$ and $G$ is a generalized quaternion group: \[Q_{2^{n+1}}:=\ang{a,b\ |\ a^{2^{n-1}}=b^2, a^{2^n}=e, b^{-1}ab=a^{-1}}.\] 
		
		\ref{restrictive2}. If $p$ is odd \ref{restrictive2} immediately follows from \ref{restrictive1}, so assume $p=2$. If $n\geq 4$, by $Q_{2^n}$ contains $Q_8$ as a proper subgroup. It follows from \ref{restrictive1} that if $p=2$ and $G$ is not cyclic, then $G=Q_8$.
		
		\ref{restrictive3}. The conclusion is obvious when $G$ is abelian, so assume that $G$ is not abelian. We may also suppose that $|G|=2^n$ for some $n\geq 4$. If $G$ contains at least one non-abelian proper subgroup, it is dihedral by \cite{miller1907groups}. Assume now that every proper subgroup of $G$ is abelian. By \cite{redei1947schiefe} or \cite[Lemma 4.5]{kang2002trace}, $G$ is isomorphic to $Q_8$, or
		\[A_{u,v}:=\ang{a,b\ |\ a^{2^u}=b^{2^v}=1, ba=a^{2^{u-1}+1}b}\cong \Z/2^u\Z\rtimes \Z/2^v\Z, \] where $u\geq 2$, $v\geq 1$ and $n=u+v$, or 
		\[B_{u,v}:=\ang{a,b,c\ |\ a^{2^u}=b^{2^v}=c^2=1, ba=cab}\cong (\Z/2^u\Z\times \Z/2\Z)\rtimes \Z/2^v\Z\] where $u,v\geq 1$ and $n=u+v+1$. Since $n\geq 4$, both groups contain a subgroup of the form $\Z/4\Z\times \Z/2\Z$, which is neither cyclic nor dihedral, hence $G=Q_8$.
	\end{proof}

	\section{Proof of Theorem \ref{classify}, $p$ odd}
	The next lemma allows us to reduce the problem of the retract rationality of $BT[d]$ to the case when $G$ is a $p$-group.
	
	\begin{lemma}\label{reducetosylow}
		Let $T$ be a $k$-torus split by a group $G$. Assume that $d\mid |G|$ and let $d=p_1^{a_1}\cdots p_r^{a_r}$ be the prime factorization of $d$. For every $i=1,\dots,r$, let $G_{i}$ be a $p_i$-Sylow subgroup of $G$, and set $k_i:=L^{G_i}$. Then $BT[d]$ is retract rational if and only if $BT_{k_i}[p_i^{a_i}]$ is retract rational over $k_i$ for every $i=1,\dots,r$.	
	\end{lemma}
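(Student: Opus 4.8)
The plan is to translate everything into the language of $G$-lattices via Saltman's criterion and then reduce the invertibility question to the Sylow subgroups $G_i$ by a transfer argument. First I would recall that $BT[d]$ is stably birational to the torus $T_d$, so $BT[d]$ is retract rational if and only if $T_d$ is; by \cite[Theorem 3.14(a)]{saltman1984retract} this holds if and only if the flasque lattice $[\hat{T}_d]^{fl}$ is an invertible $\Z[G]$-lattice. Likewise $BT_{k_i}[p_i^{a_i}]$ is retract rational over $k_i$ if and only if $[(\widehat{T_{k_i}})_{p_i^{a_i}}]^{fl}$ is invertible over $\Z[G_i]$. By \cite[31.3(ii)]{curtis1966representation} (with $\Lambda=\Z[G]$, so that $S(\Lambda)$ is the set of primes dividing $|G|$), the lattice $[\hat{T}_d]^{fl}$ is invertible over $\Z[G]$ if and only if its localization $([\hat{T}_d]^{fl})_{(p)}$ is invertible over $\Z_{(p)}[G]$ for every prime $p\mid|G|$. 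Since forming the flasque class commutes with localization, the problem reduces to a family of local invertibility statements indexed by the primes $p\mid|G|$.

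Next I would dispose of the primes $p\mid|G|$ with $p\nmid d$. Here I would use \Cref{multiple}: the class of $\hat{T}_d$ is $\alpha_d=d\cdot\alpha$, where $\alpha$ is the class of the permutation extension $\hat{S}$. Because $d$ is a unit in $\Z_{(p)}$, the extensions with classes $d\alpha$ and $\alpha$ become isomorphic after localizing at $p$, so $(\hat{T}_d)_{(p)}\cong \hat{S}_{(p)}$ is a permutation $\Z_{(p)}[G]$-lattice; hence its flasque part is trivial and $([\hat{T}_d]^{fl})_{(p)}$ is automatically invertible. Thus only the primes $p_1,\dots,p_r$ dividing $d$ contribute a genuine condition. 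For such a prime $p_i$, the index $[G:G_i]$ is prime to $p_i$ and hence a unit in $\Z_{(p_i)}$, so every $\Z_{(p_i)}[G]$-lattice $M$ is a direct summand of $\on{Ind}^G_{G_i}\on{Res}^G_{G_i}M$. A standard consequence is that a flasque $\Z_{(p_i)}[G]$-lattice $M$ is invertible if and only if $\on{Res}^G_{G_i}M$ is invertible over $\Z_{(p_i)}[G_i]$. Applying this to $M=([\hat{T}_d]^{fl})_{(p_i)}$, and using that restriction commutes with forming flasque resolutions, the local condition at $p_i$ becomes the invertibility of $\on{Res}^G_{G_i}[\hat{T}_d]^{fl}$ over $\Z_{(p_i)}[G_i]$.

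It remains to identify this with the condition coming from $T_{k_i}[p_i^{a_i}]$. Base change to $k_i=L^{G_i}$ carries the coflasque resolution of $T$ to one of $T_{k_i}$ whose class is $\on{Res}^G_{G_i}\alpha$; by \Cref{multiple} over $k_i$ the lattice $(\widehat{T_{k_i}})_{p_i^{a_i}}$ is the extension of $\on{Res}\hat{T}$ by $\on{Res}\hat{Q}$ with class $p_i^{a_i}\cdot\on{Res}\alpha$, while $\on{Res}^G_{G_i}\hat{T}_d$ is the extension with class $d\cdot\on{Res}\alpha=p_i^{a_i}(d/p_i^{a_i})\cdot\on{Res}\alpha$. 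Since $d/p_i^{a_i}$ is prime to $p_i$, hence a unit in $\Z_{(p_i)}$, these two classes agree up to a unit, so $(\on{Res}^G_{G_i}\hat{T}_d)_{(p_i)}\cong ((\widehat{T_{k_i}})_{p_i^{a_i}})_{(p_i)}$ as $\Z_{(p_i)}[G_i]$-lattices and their flasque parts coincide. Finally, since $|G_i|$ is a power of $p_i$, invertibility of the flasque $G_i$-lattice $[(\widehat{T_{k_i}})_{p_i^{a_i}}]^{fl}$ over $\Z[G_i]$ is, again by \cite[31.3(ii)]{curtis1966representation}, equivalent to its invertibility over $\Z_{(p_i)}[G_i]$. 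Chaining these equivalences shows that the local condition at $p_i$ holds if and only if $BT_{k_i}[p_i^{a_i}]$ is retract rational, and combining over all $i$ (the remaining primes being automatic) gives the lemma. The main obstacle is the Sylow-descent step: establishing that invertibility of a flasque lattice over $\Z_{(p_i)}[G]$ is detected by its restriction to the Sylow subgroup $G_i$, for which the transfer retract furnished by the unit $[G:G_i]\in\Z_{(p_i)}^\times$ is the essential input.
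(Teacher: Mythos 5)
Your proposal is correct, but it takes a genuinely different route from the paper. The paper first decomposes the group scheme itself: $T[d]\cong T[p_1^{a_1}]\times\cdots\times T[p_r^{a_r}]$, hence $BT[d]\cong\prod_i BT[p_i^{a_i}]$, reducing to the primary torsion subgroups; it then cites \cite[Lemme 9]{colliot1977r} to descend retract rationality of each torus $T_{p_i^{a_i}}$ to all the Sylow fields $k_j$, and finally disposes of the cross terms $BT_{k_j}[p_i^{a_i}]$, $i\neq j$, by \Cref{periodicity}\ref{periodicity4}, since $p_i^{a_i}$ is invertible modulo $|G_j|$. You never split $T[d]$ at all: you keep the single lattice $\hat{T}_d$, test invertibility of $[\hat{T}_d]^{fl}$ prime by prime, handle the primes $p\nmid d$ by the unit trick on $\alpha_d=d\cdot\alpha$ (this plays exactly the role that \Cref{periodicity}\ref{periodicity4} plays for the cross terms in the paper), and replace the citation of Lemme 9 by a hand-rolled transfer argument: over $\Z_{(p_i)}[G]$ every lattice is a direct summand of $\on{Ind}^G_{G_i}\on{Res}^G_{G_i}$ of itself, so invertibility is detected on the Sylow subgroup. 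In effect you reprove, at the $\Z_{(p)}[G]$-lattice level, the special cases of the two black boxes the paper invokes; what this buys is a more self-contained and visibly $p$-local argument (and it makes transparent why only the primes dividing $d$ impose conditions), at the cost of being longer than the paper's three-line reduction, which leans on machinery already established (\Cref{periodicity}\ref{periodicity4} via \Cref{surjretract}, and Colliot-Th\'el\`ene--Sansuc's descent lemma). Two small points would make your write-up airtight. First, "extensions whose classes differ by a unit $u\in\Z_{(p)}^{\times}$ have isomorphic middle terms" deserves its one-line proof: push out along multiplication by $u$ on $\hat{Q}_{(p)}$, which is an automorphism, so the middle term is unchanged while the class is multiplied by $u$. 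Second, knowing $(\hat{T}_d)_{(p)}\cong\hat{S}_{(p)}$ is permutation does not by itself say the flasque part is "trivial"; you should split the localized flasque resolution $0\to(\hat{T}_d)_{(p)}\to P_{(p)}\to F_{(p)}\to 0$ using $\on{Ext}^1_{\Z_{(p)}[G]}(F_{(p)},\hat{S}_{(p)})=\on{Ext}^1_G(F,\hat{S})_{(p)}=0$ for $F$ flasque and $\hat{S}$ permutation, i.e. \cite[Lemme 1(vii)]{colliot1977r}, exactly as in the proof of \Cref{surjretract}; the same vanishing is what upgrades your reading of \cite[31.3(ii)]{curtis1966representation} ("invertibility is local at the primes of $S(\Lambda)$") from the literal genus statement to the fact actually used, again just as in \Cref{surjretract}. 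Neither point is a gap in the ideas, only in the bookkeeping.
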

	
	\begin{proof}
		We have $BT[d]\cong BT[p_1^{a_1}]\times\cdots\times BT[p_r^{a_r}]$, so $BT[d]$ is retract rational if and only if $BT[p_i^{a_i}]$ is retract rational for $i=1,\dots,r$, that is, if and only if $T_{p_i^{a_i}}$ is retract rational for every $i$. By \cite[Lemme 9]{colliot1977r}, $T_{p_i^{a_i}}$ is retract rational if and only if $(T_{k_j})_{p_i^{a_i}}= (T_{p_i^{a_i}})_{k_j}$ is retract rational over $k_j$ for every $j=1,\dots,r$. Thus $BT[d]$ is retract rational if and only if $BT_{k_j}[p_i^{a_i}]$ is retract rational over $k_j$ for every $i,j$.
		
		If $i\neq j$, $p_i^{a_i}$ is invertible modulo $|G_j|$, so by \Cref{periodicity}\ref{periodicity4} $BT_{k_j}[p_i^{a_i}]$ is retract rational over $k_j$. It follows that $BT[d]$ is retract rational if and only if  $BT_{k_i}[p_i^{a_i}]$ is retract rational over $k_i$ for every $i$.
	\end{proof}
	Let $L/k$ be a finite Galois extension with Galois group $G$, and let $T:=R^{(1)}_{L/k}(\G_m)$.
	\begin{lemma}\label{psquare} 
		Let $p$ be an odd prime, $G=\Z/p\Z\times \Z/p\Z$. If $p$ is odd, $BT[p]$ is not retract rational.
	\end{lemma}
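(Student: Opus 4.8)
The plan is to invoke \Cref{sha}: since $d=p$ and $G$ acts trivially on $\Z/p\Z=\mathbb{F}_p$, it suffices to produce a nonzero class in $\Sh^2(G,\Z/p\Z)$. I would do this by exhibiting an explicit cup product in $H^2(G,\mathbb{F}_p)$ that dies on every cyclic subgroup.

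Write $G=\Z/p\Z\times\Z/p\Z$ and let $x_1,x_2\in H^1(G,\mathbb{F}_p)$ be the pullbacks of a generator of $H^1(\Z/p\Z,\mathbb{F}_p)$ along the two projections. By the K\"unneth formula, $H^*(G,\mathbb{F}_p)\cong H^*(\Z/p\Z,\mathbb{F}_p)\otimes H^*(\Z/p\Z,\mathbb{F}_p)$ as graded rings, and under this identification $x_1\smile x_2$ corresponds to $x_1\otimes x_2$, which is nonzero in degree $2$. Thus $x_1\smile x_2\neq 0$ in $H^2(G,\mathbb{F}_p)$.

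Next I would check that $x_1\smile x_2$ restricts to $0$ on every cyclic subgroup $C=\langle g\rangle$ of $G$. Restriction is a ring homomorphism, so $\mathrm{res}_C(x_1\smile x_2)=\mathrm{res}_C(x_1)\smile\mathrm{res}_C(x_2)$, a product of two elements of $H^1(C,\mathbb{F}_p)$. As $p$ is odd, $H^*(C,\mathbb{F}_p)\cong\Lambda(u)\otimes\mathbb{F}_p[v]$ with $u\in H^1$ and $v\in H^2$; in particular $H^1(C,\mathbb{F}_p)$ is one-dimensional and its generator squares to zero. Hence any product of two degree-one classes vanishes in $H^2(C,\mathbb{F}_p)$, so $\mathrm{res}_C(x_1\smile x_2)=0$. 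Since this holds for all $g\in G$, we conclude $0\neq x_1\smile x_2\in\Sh^2(G,\Z/p\Z)$, and \Cref{sha} gives that $BT[p]$ is not retract rational.

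The argument is essentially complete once these two cohomological facts are in place; the only point requiring care is the oddness of $p$, which is exactly what forces the square of a degree-one class to vanish and thereby places $x_1\smile x_2$ in the \v{S}afarevi\v{c}--Tate kernel (for $p=2$ this mechanism fails, consistent with the different behavior recorded in \Cref{classify}). No genuinely hard step remains: both the K\"unneth identification and the structure of $H^*$ of a cyclic $p$-group are standard, and only the first two cohomology groups are actually needed.
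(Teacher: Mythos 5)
Your proof is correct, and it takes a genuinely different route from the paper. Both arguments reduce, via \Cref{sha}, to exhibiting a nonzero class in $\Sh^2(\Z/p\Z\times\Z/p\Z,\Z/p\Z)$, but the paper produces this class as the extension class of the Heisenberg group $H_p$ of order $p^3$: the class is nonzero because $H_p$ is non-abelian (here $p$ odd is used), and it restricts trivially to each $\ang{g}$ because every non-trivial element of $H_p$ has order $p$ (again using $p$ odd), so the preimage of $\ang{g}$ is $\Z/p\Z\times\Z/p\Z$ and the restricted extension splits. You instead work entirely inside the cohomology ring: nonvanishing of $x_1\smile x_2$ comes from the K\"unneth theorem with $\mathbb{F}_p$-coefficients, and triviality of the restrictions comes from graded commutativity, which kills squares of degree-one classes over a cyclic group when $p$ is odd. (The two constructions in fact produce the same class up to a nonzero scalar, since the Heisenberg extension class is a cup product of the two coordinate characters, but the verifications are different in substance.) What the paper's approach buys is uniformity with the rest of the article: the $p=2$ computations in \Cref{smallcases} use exactly the same explicit-extension technique with $Q_{2^{n+1}}$, $\Z/4\Z\rtimes\Z/4\Z$, and so on, so a single method handles all cases. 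What your approach buys is conceptual clarity and generality: the role of $p$ being odd is isolated in the single identity $u\smile u=0$, the argument visibly fails for $p=2$ (where $H^*(\Z/2\Z,\mathbb{F}_2)=\mathbb{F}_2[u]$ has $u^2\neq 0$, consistent with \Cref{smallcases}\ref{dih}), and the same computation immediately shows $\Sh^2((\Z/p\Z)^n,\Z/p\Z)\neq 0$ for all $n\geq 2$ without constructing any new groups. One cosmetic point: for the trivial cyclic subgroup your dimension count of $H^1$ does not apply, but there the restriction lands in $H^2(1,\mathbb{F}_p)=0$, so nothing is lost.
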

	
	\begin{proof}
		By \Cref{sha} it suffices to prove that $\Sh^2(\Z/p\Z\times \Z/p\Z,\Z/p\Z)\neq 0$. Let $H_p$ be the Heisenberg group of order $p^3$. The group $H_p$ fits in a central short exact sequence \[1\to \Z/p\Z\to H_p\xrightarrow{\pi} \Z/p\Z\times \Z/p\Z\to 1.\] Let $\alpha\in H^2(\Z/p\Z\times\Z/p\Z,\Z/p\Z)$ be the class corresponding to this extension. Since $p$ is odd, $H_p$ is not commutative, so  $\alpha\neq 0$. If $g\in \Z/p\Z\times\Z/p\Z$, the image of $\alpha$ in $H^2(\ang{g},\Z/p\Z)$ is the class of the extension \[1\to \Z/p\Z \to \pi^{-1}(\ang{g})\to \ang{g}\to 1.\] Since every non-trivial element $g\in H_p$ has order $p$, $\pi^{-1}(\ang{g})\cong \Z/p\Z\times\Z/p\Z$, showing that $\alpha$ restricts to the trivial class in $H^2(\ang{g},\Z/p\Z)$. It follows that $0\neq \alpha\in \Sh^2(\Z/p\Z\times \Z/p\Z,\Z/p\Z)$, as desired.
	\end{proof}
	Note that \Cref{psquare} fails when $p=2$; see \Cref{smallcases}\ref{dih} below. 
	\begin{proof}[Proof of \Cref{classify}, $p$ odd]
		Note that if $p\nmid d$, then $BT[d]$ is stably rational by \Cref{periodicity}\ref{periodicity5}. From now on, we assume that $p\mid d$.
		
		\ref{classify1}. When $G$ is cyclic, every torus split by $G$ is retract rational by \cite[Proposition 2]{colliot1977r}, hence $T_d$ is retract rational. It follows that $BT[d]$ is also retract rational.
		
		\ref{classify2}. If $G$ is not cyclic, by \Cref{restrictive}\ref{restrictive1} it contains a subgroup isomorphic to $\Z/p\Z\times \Z/p\Z$. Since $p\mid d$, the conclusion follows from \Cref{psquare} and \Cref{subgroup}.		
	\end{proof}
	
	\section{A flasque resolution of $\hat{T}_d$}
	In order to complete the proof of \ref{classify}, it remains to consider the case $p=2$. In this section we construct an explicit flasque resolution of $\hat{T}_d$. The construction works for any prime $p$ and any $d$, but we will use it only in the case when $p=2$.
	
	We may assume that $G$ is a $p$-group, and by \Cref{periodicity} that $d\mid |G|$. Consider the sequence
	\begin{equation}\label{norm1torsion} 0\to M_d\to \Z[G]\oplus \Z\xrightarrow{\alpha} \Z\to 0,\end{equation}
	where $M_d:=\hat{T}'_d$, obtained by dualizing (\ref{norm1torsion0}). The map $\alpha$ is defined by $\alpha(e_g,0)=1$ for every $g\in G$ and $\alpha(0,1)=-d$, so \[M_d=\set{(a_g;a)\in \Z[G]\oplus\Z: \sum a_g-da=0}.\]	
	We need to construct an explicit coflasque resolution of $M_d$. The standard coflasque resolution (see e.g. \cite[Lemme 3]{colliot1977r}) applied to $M_d$ is too unwieldy for the computations that we want to perform. Thus we produce an ad-hoc coflasque resolution of $M_d$. Consider the short exact sequence	
	\begin{equation}\label{cofl}
	0\to N_d\to P_d\xrightarrow{\beta} M_d\to 0.
	\end{equation}
	Here $P_d:=\Z[G\times G]\oplus R_d$, where $R_d:=\oplus_{G'< G} \Z[G/G']$, the sum being over all subgroups $G'$ of $G$; in particular $P_d$ is a permutation lattice. We denote by $(g,g')$, $g,g'\in G$, the elements of the standard basis of $\Z[G\times G]$. For every subgroup $G'$ of $G$, denote by $e_{G'}$ the element of $P_d$ supported on the summand $\Z[G/G']$ and having a $1$ in the $\Z$-component corresponding to the coset of $G'$ in $G/G'$, and $0$ everywhere else. Recall that $\on{Hom}_G(\Z[G/G'],M_d)=\on{Hom}_{G'}(\Z,M_d)\cong M_d^{G'}$, the isomorphism being given by $\phi\mapsto \phi(e_{G'})\in M_d^{G'}$, so a map $R_d\to M_d$  is specified by the images of all the $e_{G'}$, and for every subgroup $G'$ the image of $e_{G'}$ must be $G'$-invariant. Then $\beta$, as a map $P_d\to \Z[G]\oplus \Z$, is defined by $\beta((g,g'),0):=(e_{g}-e_{g'},0)$ and 
	\begin{align*}
	\beta(e_{G'}):=\begin{cases}
	(\sigma_{G'},|G'|/d) &\text{if $d\mid |G'|$,}\\
	((d/|G'|)\sigma_{G'},1) &\text{if $d\nmid |G'|$.}\\
	\end{cases}
	\end{align*}
	Note that since $G$ is a $p$-group and $d\mid|G|$, either $d\mid|G'|$ or $|G'|\mid d$. To see that $\beta$ maps $P_d$ to $M_d$, observe that clearly $\alpha(\beta((g_1,g_2),0))=0$ and that
	\begin{align*}
	\alpha(\beta(e_{G'}))=\begin{cases}
	|G'|-d\cdot |G'|/d=0 &\text{if $d\mid |G'|$,}\\
	(d/|G'|)\cdot|G'|-d=0 &\text{if $d\nmid |G'|$.}\\
	\end{cases}
	\end{align*}
	Moreover, $\beta(e_{G'})\in M_d^{G'}$, so $\beta$ is $G$-equivariant. If we fix a subgroup $G_0<G$ of order $d$, then $M_d$ is generated as a $G$-module by elements of the form $(e_{g}-e_{g'},0)$, $g,g'\in G$, and $(\sigma_{G_0},1)$. Now, $e_{g}-e_{g'}=\beta((g,g'),0)$, and $(\sigma_{G_0},1)=\beta(e_{G_0})$, so $\beta$ is surjective. We define $N_d:=\on{ker}\beta$. 
	
	\begin{lemma}\label{coflres}
		The $G$-lattice $N_d$ is coflasque, so (\ref{cofl}) is a coflasque resolution of $M_d$. 
	\end{lemma}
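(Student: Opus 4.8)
The plan is to use the standard cohomological criterion: a $G$-lattice is coflasque exactly when $H^1(G',-)$ vanishes on it for every subgroup $G'\leq G$. So I fix an arbitrary subgroup $G'\leq G$ and aim to prove $H^1(G',N_d)=0$, which then gives the lemma.

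The first step is to reduce this to a surjectivity statement about invariants. Since $P_d$ is a permutation lattice it is coflasque: restricting to $G'$ and applying Mackey's formula writes $P_d$ as a direct sum of modules $\Z[G'/K]$, and $H^1(G',\Z[G'/K])\cong H^1(K,\Z)=\on{Hom}(K,\Z)=0$ because $K$ is finite. Feeding $H^1(G',P_d)=0$ into the long exact cohomology sequence of $(\ref{cofl})$ yields a natural isomorphism $H^1(G',N_d)\cong \on{coker}\big(P_d^{G'}\xrightarrow{\beta}M_d^{G'}\big)$. Hence it suffices to show that for every $G'$ the induced map $\beta\colon P_d^{G'}\to M_d^{G'}$ is surjective.

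The second step is to describe both invariant lattices explicitly. With $G$ acting by left translation on $\Z[G]$ and trivially on $\Z$, an element $(a_g;a)\in M_d$ is $G'$-invariant precisely when its coefficient function is constant on the right cosets $C\in G'\backslash G$; writing $a_C$ for the common value, the defining relation becomes $|G'|\sum_C a_C=da$. Because $G$ is a $p$-group and $d\mid |G|$, every subgroup satisfies either $d\mid|G'|$ or $|G'|\mid d$, and in either case the admissible values of $a$ form the cyclic group $(|G'|/\gcd(d,|G'|))\Z$. The key observation is that the element $e_{G'}$ is $G'$-invariant and that $\beta(e_{G'})$, equal to $(\sigma_{G'},|G'|/d)$ when $d\mid|G'|$ and to $((d/|G'|)\sigma_{G'},1)$ when $d\nmid|G'|$, realizes a generator of this cyclic group of admissible $a$-values, with all its coset weight on the identity coset $G'$.

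The third step concludes the surjectivity. Given any $(a_g;a)\in M_d^{G'}$, subtract the appropriate integer multiple of $\beta(e_{G'})$ (legitimate since $a$ lies in the cyclic group generated by the $a$-value of $\beta(e_{G'})$) to kill the $\Z$-coordinate; what remains is a coset-constant function of total weight $0$, hence a $\Z$-combination of differences $\sigma_C-\sigma_{C'}$ of right-coset indicators. Each such difference equals $\beta\big(\sum_{h\in G'}(hg,hg')\big)$ for cosets $C=G'g$, $C'=G'g'$, and $\sum_{h\in G'}(hg,hg')$ is a manifestly $G'$-invariant element of $\Z[G\times G]\subseteq P_d$. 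Thus every element of $M_d^{G'}$ lifts, so $\beta^{G'}$ is onto and $H^1(G',N_d)=0$. The main obstacle is the bookkeeping in the second step: one must check that the two-case definition of $\beta(e_{G'})$ produces exactly a generator of the admissible $a$-values for every $G'$, which is precisely where the $p$-group hypothesis (forcing the dichotomy $d\mid|G'|$ or $|G'|\mid d$) and the specific constants $|G'|/d$ and $d/|G'|$ enter.
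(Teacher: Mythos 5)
Your proof is correct, and it follows the same skeleton as the paper's: both arguments use $H^1(G',P_d)=0$ and the long exact sequence to reduce the lemma to a surjectivity statement for $\beta$, and both hinge on the identical key computation, namely that the $\Z$-component of $\beta(e_{G'})$ generates the relevant cyclic group, which is exactly where the $p$-group dichotomy ($d\mid |G'|$ or $|G'|\mid d$) and the two-case definition of $\beta(e_{G'})$ enter. The difference is the level at which surjectivity is proved. The paper works in Tate cohomology: it shows $\hat{H}^0(G',P_d)\to \hat{H}^0(G',M_d)$ is onto, after identifying $\hat{H}^0(G',M_d)$ with the $d$-torsion of $\Z/|G'|\Z$ via the sequence (\ref{norm1torsion}); since every coset-constant element of $M_d^{G'}$ with zero $\Z$-coordinate and total weight zero is a norm (for instance $(\sigma_{G'g}-\sigma_{G'g'},0)$ is the image of $(e_g-e_{g'},0)\in M_d$ under the norm $\sum_{h\in G'}h$), such elements vanish in $\hat{H}^0$, and the paper only ever needs the single summand $\Z[G/G']$ of $R_d$. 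You instead prove the stronger statement that $\beta$ is surjective on honest invariants $P_d^{G'}\to M_d^{G'}$, which forces you to lift the weight-zero part as well; you do this correctly with the manifestly $G'$-invariant elements $\sum_{h\in G'}(hg,hg')$ of $\Z[G\times G]$, a summand of $P_d$ that plays no role in the paper's argument (it even has vanishing $\hat{H}^0$, being a free module). So your route is more elementary and explicit, avoiding Tate cohomology entirely at the cost of some bookkeeping, while the paper's is shorter because passing to invariants modulo norms kills precisely the terms you had to handle by hand.
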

	
	\begin{proof}
		Let $G'$ be a subgroup of $G$. Since $P_d$ is a permutation lattice, we have $H^{1}(G',P_d)=0$. Recall that we denote degree $0$ Tate cohomology by $\hat{H}^0$. The cohomology long exact sequence associated to (\ref{cofl}) then yields \[\hat{H}^0(G',P_d)\xrightarrow{\hat{H}^0(\beta)} \hat{H}^0(G',M_d)\to H^{1}(G',N_d)\to 0,\] so to prove that $H^1(G',N_d)=0$ it suffices to show that $\hat{H}^0(\beta)$ is surjective. Since $H^{-1}(G',\Z)=0$, passing to cohomology in (\ref{norm1torsion}) gives \[0\to \hat{H}^{0}(G',M_d)\to\hat{H}^0(G',\Z[G]\oplus \Z)\to\hat{H}^0(G',\Z),\] which can be rewritten as
		\begin{equation}\label{cofl1}0\to \hat{H}^{0}(G',M_d)\to\Z/|G'|\Z\xrightarrow{-d}\Z/|G'|\Z.\end{equation}
		We have $\hat{H}^0(G',P_d)=\hat{H}^0(G',R_d)=\oplus_{G''<G}\hat{H}^0(G',\Z[G/G''])$. To prove that $\hat{H}^0(\beta)$ is surjective it suffices to show that the map \[\gamma:\hat{H}^0(G',\Z[G/G'])\to \hat{H}^0(G',M_d)\] given by the summand relative to $G''=G'$ is surjective. If we view $\hat{H}^0(G',M_d)$ as the $d$-torsion of $\hat{H}^0(G',\Z[G]\oplus \Z)$ via (\ref{cofl1}), then $\gamma(e_{G'})$ coincides with the $\Z$-component of $\beta(e_{G'})\in \Z[G]\oplus \Z$ modulo $|G'|$, that is:
		\begin{align*}
		\gamma(e_{G'})=\begin{cases}
		|G'|/d &\text{if $d\mid |G'|$,}\\
		1 &\text{if $d\nmid |G'|$.}\\
		\end{cases}
		\end{align*}
		In both cases $\gamma(e_{G'})$ generates $\hat{H}^0(G',M_d)$. It follows that $\gamma$ is surjective, as desired. 
	\end{proof}
	
	\begin{prop}\label{cyclic}
		Let $L/k$ be a Galois extension whose Galois group $G$ is a $p$-group, and set $T:=R_{L/k}^{(1)}(\G_m)$. If $BT[d]$ is retract rational for some $d\mid |G|$, then any subgroup of $G$ of order $d$ is cyclic.
	\end{prop}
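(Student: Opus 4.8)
The plan is to prove the contrapositive: if $G$ has a non-cyclic subgroup $G_0$ of order $d$, then $BT[d]$ is not retract rational. Since $G$ is a $p$-group and $d\mid|G|$, write $d=p^m$; as $G_0$ is non-cyclic, $m\geq 2$. By \Cref{restrictive}\ref{restrictive1}, either $G_0$ contains a copy of $\Z/p\Z\times\Z/p\Z$, or $p=2$ and $G_0$ is a generalized quaternion group $Q_{2^m}$. In the first case set $H:=\Z/p\Z\times\Z/p\Z\leq G_0$, and in the second set $H:=G_0$; in both cases $H$ is a subgroup of $G$ with $|H|\mid d$. By \Cref{subgroup} it suffices to prove that $BR^{(1)}_{L/L^H}(\G_m)[d]$ is not retract rational over $L^H$, where the Galois group of $L/L^H$ is $H$. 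Writing $S:=R^{(1)}_{L/L^H}(\G_m)$, we have $e(S)=|H|\mid d$, so \Cref{periodicity}\ref{periodicity2} shows that $BS[d]$ is stably birational to $S\times BS\sim S\times\G_m$, reducing us to showing that the norm one torus $S$ itself is not retract rational.

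The cohomological input is a computation of $\Sh^2(H,\hat S)$. Using the defining sequence $0\to\Z\xrightarrow{\sigma_H}\Z[H]\to\hat S\to 0$ together with Shapiro's lemma, the module $\Z[H]$ is cohomologically trivial over every cyclic subgroup $\langle h\rangle$, so $H^2(\langle h\rangle,\hat S)\cong H^3(\langle h\rangle,\Z)=0$. Hence all cyclic restriction maps are zero and $\Sh^2(H,\hat S)=H^2(H,\hat S)\cong H^3(H,\Z)\cong H^2(H,\Q/\Z)=\on{Hom}(M(H),\Q/\Z)$, the dual of the Schur multiplier $M(H)$. When $H=\Z/p\Z\times\Z/p\Z$ one has $M(H)=\Z/p\Z\neq 0$ for every prime $p$; this is the computation underlying \Cref{psquare}, but with $\Q/\Z$-coefficients it is now valid for $p=2$ as well. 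By (\ref{shah1}) it follows that $H^1(H,[\hat S]^{fl})\cong\Sh^2(H,\hat S)\neq 0$, so $[\hat S]^{fl}$ is not coflasque (permutation lattices, and hence invertible ones, are coflasque), thus not invertible, and $S$ is not retract rational. This disposes of every non-cyclic $p$-group that is not generalized quaternion.

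The main obstacle is the generalized quaternion case $H=Q_{2^m}$, where the argument above breaks down completely: the Schur multiplier $M(Q_{2^m})$ is trivial, every proper subgroup of $Q_{2^m}$ is cyclic, and consequently $H^1(H',[\hat S]^{fl})\cong\Sh^2(H',\hat S)=H^2(H',\Q/\Z)=0$ for every subgroup $H'\leq H$. Thus $[\hat S]^{fl}$ is simultaneously flasque and coflasque yet must fail to be invertible, an obstruction that no vanishing of $H^1$ can detect. To settle this case I would invoke the classification of retract-rational norm one tori: by Endo--Miyata and Colliot-Th\'el\`ene--Sansuc \cite{colliot1977r}, $R^{(1)}_{L/k}(\G_m)$ is retract rational if and only if every Sylow subgroup of $\on{Gal}(L/k)$ is cyclic, so that $S$ is not retract rational since $Q_{2^m}$ is not cyclic. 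Alternatively this case can be handled directly with the explicit flasque resolution $N_d$ constructed in this section, by showing that $N_d$ is not a direct summand of a permutation lattice. Either way $G_0$ cannot be non-cyclic, which proves the proposition.
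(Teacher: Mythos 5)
Your proof is correct, but it takes a genuinely different route from the paper's. The paper argues directly and self-containedly: if $BT[d]$ is retract rational, then by Saltman's criterion the lattice $N_d$ from the explicit coflasque resolution (\ref{cofl}) is invertible, hence a direct summand of a permutation lattice $P$; chasing the two long exact sequences embeds $H^1(G',M_d)\cong\Z/d\Z$ into $H^2(G',P)$ for any subgroup $G'$ of order $d$, and since $H^2(G',\Z[G'/G''])\cong \on{Hom}(G'',\Q/\Z)$ can contain an element of order $|G'|$ only if $G''=G'$ is cyclic, $G'$ is forced to be cyclic. You instead prove the contrapositive and use \Cref{restrictive}\ref{restrictive1}, \Cref{subgroup} and \Cref{periodicity}\ref{periodicity2} (legitimately so: $e(S)=|H|$ divides $d$ in both of your cases) to reduce to the failure of retract rationality of the norm one torus $S$ itself. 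Your Schur multiplier computation for $H=\Z/p\Z\times\Z/p\Z$ is correct and even improves on \Cref{psquare}, since it works uniformly for all $p$ including $p=2$; with lattice coefficients the cyclic restrictions vanish automatically, so there is no tension with $\Sh^2(D_4,\Z/2\Z)=0$ from \Cref{smallcases}\ref{dih}. For the quaternion case, however, you import the Endo--Miyata/Colliot-Th\'el\`ene--Sansuc classification of retract rational norm one tori (retract rational if and only if all Sylow subgroups are cyclic); that theorem is classical and independent of this paper, so there is no circularity, but it is a far heavier external input than anything used in the paper's own proof, and crediting the full classification to \cite{colliot1977r} is loose---what you actually need from that reference is only the $Q_8$ case \cite[Corollaire, p.~185]{colliot1977r}, which would suffice after one more application of \Cref{subgroup} to a $Q_8$ subgroup of $Q_{2^m}$, since $8\mid d$ there. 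Two small blemishes: your parenthetical claim that every proper subgroup of $Q_{2^m}$ is cyclic fails for $m\geq 4$ (there are proper generalized quaternion subgroups), though your conclusion that the $\Sh^2$-obstruction vanishes on all subgroups survives because those subgroups also have trivial Schur multiplier; and your ``alternative'' treatment of the quaternion case via $N_d$ is left unsubstantiated---carried out, it would essentially reproduce the paper's argument. In sum, the paper's proof buys uniformity and self-containedness (no case division, no classification theorem, and it reuses the resolution built in this section and exploited again in \Cref{gap}), while yours buys conceptual transparency, exhibiting the proposition as the classical Sylow-cyclicity criterion for norm one tori transported to $T[d]$ by the periodicity theorem.
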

	
	\begin{proof}
		Assume that $BT[d]$ is retract rational. Then by \cite[Proposition 6.1]{scavia2018retract} $T_d$ is retract rational. By \Cref{coflres}, the dual of (\ref{cofl}) is a flasque resolution of $\hat{T}_d$. By \cite[Theorem 3.14(a)]{saltman1984retract} $N'_d$ is invertible, hence so is $N_d$. This means that there exists a $G$-lattice $U$ such that $N_d\oplus U=P$ for some permutation lattice $P$. In particular, for every subgroup $G'$ of $G$ we have an embedding $H^2(G',N_d)\hookrightarrow H^2(G',P)$. Since ${H}^{-1}(G',P_d)=0$, the long exact sequence associated to (\ref{cofl}) shows that $H^1(G',M_d)$ embeds in $H^2(G',N_d)$, hence in $H^2(G',P)$. The long exact sequence associated to (\ref{norm1torsion}) gives \[H^0(G',\Z[G]\oplus \Z)\to H^0(G',\Z)\to H^1(G',M_d)\to 0.\] This shows that if $d\mid |G'|$, then $H^1(G',M_d)\cong \Z/d\Z$. If $|G'|=d$, it follows that $H^2(G',P)$ contains an element of order $|G'|$. Now, $P$ is a permutation $G'$-module, that is, a direct sum of $G'$-modules of the form $\Z[G'/G'']$, where $G''$ is a subgroup of $G'$. Since \[H^2(G',\Z[G'/G''])=H^2(G'',\Z)=H^1(G'',\Q/\Z)\cong G''\] for every subgroup $G''$ of $G'$, the existence of an element of order $|G'|$ in $H^2(G',P)$ implies that $G'$ must be cyclic.	
	\end{proof}
	Note that, when $p$ is odd, \Cref{cyclic} is weaker than \Cref{classify}\ref{classify2}, which we have already proved. We will use \Cref{cyclic} to prove \Cref{classify} when $p=2$.
	
	\section{Proof of Theorem \ref{classify}, $p=2$}
	For every even integer $r\geq 4$, we denote by $D_{r}$ the dihedral group of $r$ elements. In particular $D_4\cong \Z/2\Z\times \Z/2\Z$.
	
	\begin{lemma}\label{smallcases}
		Let $L/k$ be a Galois extension with Galois group $G$, and let $T:=R^{(1)}_{L/k}(\G_m)$.
		\begin{enumerate}[label=(\alph*)]
			\item\label{dih} Assume that $G=D_{2^n}$ for some $n\geq 1$. Then $BT[2]$ is retract rational. 
			\item\label{quat1} Let $G=Q_8$ be the quaternion group. Then $BT[2]$ is not retract rational.
			\item\label{c4c2} Let $G=\Z/4\Z\times \Z/2\Z$. Then $BT[2]$ is not retract rational.
		\end{enumerate}
	\end{lemma}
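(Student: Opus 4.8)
Parts \ref{quat1} and \ref{c4c2} are negative statements and \ref{dih} is positive; I would treat them separately. In all three cases $G$ acts trivially on $\Z/2\Z$, so for \ref{quat1} and \ref{c4c2} the plan is to apply \Cref{sha}: it is enough to produce a nonzero class in $H^2(G,\Z/2\Z)$ that restricts to zero in $H^2(\ang{g},\Z/2\Z)$ for every $g\in G$, i.e. to show $\Sh^2(G,\Z/2\Z)\neq 0$. By transitivity of restriction I only need to test vanishing on the maximal cyclic subgroups.

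For \ref{quat1} I would use that the maximal cyclic subgroups of $Q_8$ are $\ang{i},\ang{j},\ang{k}$, each $\cong\Z/4\Z$. The key point is that in $H^*(\Z/4\Z,\Z/2\Z)$ the degree-one generator squares to zero, so every product of two one-dimensional classes dies in degree $2$. Since $H^2(Q_8,\Z/2\Z)$ is two-dimensional and spanned by products of classes in $H^1(Q_8,\Z/2\Z)$ (the first polynomial generator of the mod-$2$ cohomology ring of $Q_8$ sits in degree $4$), its image in each $H^2(\Z/4\Z,\Z/2\Z)$ vanishes, hence so does its image in every $H^2(\ang{g},\Z/2\Z)$. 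Thus $\Sh^2(Q_8,\Z/2\Z)=H^2(Q_8,\Z/2\Z)\neq 0$, and \Cref{sha} concludes.

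For \ref{c4c2} I would write $G=\ang{a}\times\ang{b}$ with $a$ of order $4$, $b$ of order $2$, and let $u,t\in H^1(G,\Z/2\Z)$ be dual to $a,b$. By K\"unneth, $H^2(G,\Z/2\Z)$ has basis $\{v,ut,t^2\}$, where $v$ generates $H^2(\ang{a},\Z/2\Z)$. The maximal cyclic subgroups are $\ang{a}$ and $\ang{ab}$ (both $\cong\Z/4\Z$) and $\ang{b}$ and $\ang{a^2b}$ (both $\cong\Z/2\Z$). A computation of the four restriction maps should show that $v$ survives on $\ang{a}$ and $t^2$ survives on $\ang{b}$, while the mixed class $ut$ restricts to zero on all four: on the $\Z/4\Z$-subgroups because the relevant one-dimensional classes square to zero, and on the $\Z/2\Z$-subgroups because one of $u,t$ restricts to zero. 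Hence $\ang{ut}$ is a nonzero subgroup of $\Sh^2(G,\Z/2\Z)$ and \Cref{sha} applies.

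The positive statement \ref{dih} is the crux, because \Cref{sha} and \Cref{cyclic} give only necessary conditions and a genuine invertibility argument is needed. Here the plan is to use the explicit resolution (\ref{cofl}): by \Cref{coflres} its dual is a flasque resolution of $\hat{T}_2$, so by \cite[Theorem 3.14(a)]{saltman1984retract} it suffices to prove that $N_2$ is invertible for $G=D_{2^n}$. I would argue by induction on $n$, with base cases $G=\Z/2\Z$ (cyclic, hence retract rational) and $G=\Z/2\Z\times\Z/2\Z$, the latter being consistent with the vanishing $\Sh^2(D_{2^n},\Z/2\Z)=0$. For the inductive step I would use that every proper subgroup of $D_{2^n}$ is cyclic or a smaller dihedral group, so that $N_2$ restricts to an invertible lattice on each proper subgroup (flasque lattices over cyclic groups are invertible, and the dihedral case is the inductive hypothesis, transported via \Cref{subgroup}). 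The hard part will be to promote invertibility on all proper subgroups to invertibility of $N_2$ over the whole group $D_{2^n}$, which is not automatic; I expect to resolve this by writing $N_2$ explicitly as a direct summand of a permutation lattice, using the generators $(g,g')$ and $e_{G'}$ of $P_2$ together with the index-two cyclic subgroup of $D_{2^n}$ to produce the splitting.
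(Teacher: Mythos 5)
Your treatment of parts \ref{quat1} and \ref{c4c2} is correct, and it reaches \Cref{sha} by a genuinely different route than the paper. The paper exhibits, in each case, a single nonzero class of $\Sh^2(G,\Z/2\Z)$ as the class of a concrete central extension ($\Z/4\Z\rtimes\Z/4\Z\twoheadrightarrow Q_8$ for \ref{quat1}, and $(\Z/4\Z\times\Z/2\Z)\rtimes\Z/2\Z\twoheadrightarrow\Z/4\Z\times\Z/2\Z$ for \ref{c4c2}), and verifies by hand that the preimage of every cyclic subgroup is a split extension. You instead compute in the mod $2$ cohomology ring, using that the degree-one generator of $H^*(\Z/4\Z,\Z/2\Z)$ squares to zero: for $Q_8$ all of $H^2$ is decomposable (the periodicity generator sits in degree $4$), so the whole group $H^2(Q_8,\Z/2\Z)\cong(\Z/2\Z)^2$ lies in $\Sh^2$; for $\Z/4\Z\times\Z/2\Z$ the K\"unneth class $ut$ dies on the four maximal cyclic subgroups, and testing only maximal cyclic subgroups is legitimate by transitivity of restriction. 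Your version is shorter and proves slightly more than the paper's (which produces one Sha class rather than all of them), at the cost of importing the ring structure of $H^*(Q_8;\mathbb{F}_2)$ instead of elementary group theory; the same ring-theoretic method would also give a complete proof of the vanishing $\Sh^2(D_{2^n},\Z/2\Z)=0$, which the paper treats rather tersely.

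Part \ref{dih}, however, contains a genuine gap, and you name it yourself: ``promote invertibility on all proper subgroups to invertibility of $N_2$ over the whole group'' is exactly the implication that is false in general, so no induction of the shape you propose can close. Invertibility of a flasque lattice is not detected on proper subgroups: for $G=Q_8$ every proper subgroup is cyclic, so every flasque $G$-lattice restricts to an invertible lattice on every proper subgroup, and yet $\Z[Q_8]/\ang{\sigma_{Q_8}}$ is flasque and coflasque but not invertible \cite[Corollaire, p.~185]{colliot1977r}. Any successful version of your inductive step must therefore use an input that distinguishes dihedral from quaternion $2$-groups, and ``writing $N_2$ explicitly as a direct summand of a permutation lattice'' is a restatement of the goal, not such an input. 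Your base case $G=\Z/2\Z\times\Z/2\Z$ is also unproved: ``consistent with $\Sh^2=0$'' is not an argument, and the vanishing $\Sh^2(G,\Z/2\Z)=0$ yields, via \Cref{finalisom} and (\ref{shah1}), only $H^1(G,[\hat{T}_2]^{fl})=0$, i.e.\ coflasqueness of the flasque part, which by itself does not give invertibility.

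The paper closes precisely this gap with a classification theorem. It shows $\Sh^2(D_{2^m},\Z/2\Z)=0$ for all $m$ (the class of $1\to\Z/2\Z\to Q_{2^{m+1}}\to D_{2^m}\to 1$ restricts nontrivially to the maximal cyclic subgroup), so $H^1(G',[\hat{T}_2]^{fl})=0$ for every dihedral subgroup $G'$, while cyclic subgroups are handled by \cite[Proposition 2]{colliot1977r}; since every subgroup of $D_{2^n}$ is cyclic or dihedral, $[\hat{T}_2]^{fl}$ is flasque and coflasque. The decisive imported fact is then \cite[\S 4, Theorem 5]{voskresenskii2011algebraic}: over such groups a flasque and coflasque lattice is invertible --- exactly the statement that fails for $Q_8$. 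With that theorem in hand, \cite[Theorem 3.14(a)]{saltman1984retract} gives retract rationality of $T_2$ and hence of $BT[2]$, and your induction becomes unnecessary; without it (or a by-hand proof of it for dihedral $2$-groups, which is essentially what your direct-summand plan would have to reprove), your scheme for \ref{dih} does not go through.
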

	
	\begin{proof}
		\ref{dih} We start by showing that $\Sh^2(D_{2^n},\Z/2\Z)=0$. Recall that the non-trivial element $\alpha$ of $H^2(D_{2^n},\Z/2\Z)\cong \Z/2\Z$ is given by the central extension \[1\to \Z/2\Z\to Q_{2^{n+1}}\to D_{2^n}\to 1.\] Let $g\in Q_{2^{n+1}}$ be an element of order $2^n$, and let $\cl{g}\in D_{2^n}$ be the image of $g$. Then $\cl{g}$ has order $2^{n-1}$ and the following sequence is exact:
		\[1\to \Z/2\Z\to \ang{g}\to \ang{\cl{g}}\to 1.\] This shows that the restriction map $H^2(D_{2^n},\Z/2\Z)\to H^2(\ang{\cl{g}},\Z/2\Z)$ sends $\alpha$ to a non-zero class. This means that $\Sh^2(D_{2^n},\Z/2\Z)=0$, as claimed. 
		
		The conclusion now follows from known results. If $G'$ is a dihedral subgroup of $D_{2^n}$, then we have just shown that $\Sh^2(G',\Z/2\Z)=0$. By (\ref{shah1}), we obtain $H^1(G', [\hat{T}_2]^{fl})=0$. If $G'$ is a cyclic subgroup of $G$, then $H^1(G',[\hat{T}_2]^{fl})=0$ by \cite[Proposition 2]{colliot1977r}. Since every subgroup of $G$ is either cyclic or dihedral, it follows that $[\hat{T}_2]^{fl}$ is coflasque. By \cite[\S 4, Theorem 5]{voskresenskii2011algebraic}, $[\hat{T}_2]^{fl}$ is invertible, so by \cite[Proposition 3.14(a)]{saltman1984retract} $T_2$ is retract rational. Since $R_{L/k}(\G_m)\to T_2$ is a generic $T[2]$-torsor, $BT[2]$ is stably rational. 
		
		\ref{quat1}. By \Cref{sha} it suffices to show that $\Sh^2(Q_8,\Z/2\Z)\neq 0$. Recall that $H^2(Q_8,\Z/2\Z)\cong \Z/2\Z\times \Z/2\Z$, and the three non-trivial classes correspond to central extensions of the form \begin{equation}\label{quaternion1}1\to \Z/2\Z\xrightarrow{\iota} \Gamma\xrightarrow{\pi} Q_8\to 1,\end{equation} where \[\Gamma:=\ang{a,b\ |\ a^4=b^4=e, bab^{-1}=a^{-1}}\cong \Z/4\Z\rtimes \Z/4\Z\] is the unique non-trivial semidirect product of $\Z/4\Z$ and $\Z/4\Z$. Note that $Z(\Gamma)=\set{e,a^2,b^2,a^2b^2}\cong \Z/2\Z\times \Z/2\Z$, and $\iota(1)=a^2b^2$. Let $\alpha\in H^2(Q_8,\Z/2\Z)$ be the class of (\ref{quaternion1}), in the case when $\pi(a)=i$ and $\pi(b)=j$. Since \[\pi^{-1}(\ang{-1})=\ang{a^2,b^2}\cong \Z/2\Z\times \Z/2\Z,\] the sequence \[1\to \Z/2\Z\to \pi^{-1}(\ang{-1})\to \ang{-1}\to 1\] splits, so $\alpha$ restricts to $0$ in $H^1(\ang{-1},\Z/2\Z)$. Moreover, the subgroups\[\pi^{-1}(\ang{i})=\ang{a,b^2},\quad \pi^{-1}(\ang{j})=\ang{a,b^2},\quad \pi^{-1}(\ang{i})=\ang{ab,a^2b^2}\] all admit a set of two commuting generators of orders $4$ and $2$, hence they are isomorphic to $\Z/4\Z\times \Z/2\Z$. It follows that $\alpha$ restricts to $0$ on $H^2(G',\Z/2\Z)$ for every proper subgroup $G'$ of $Q_8$, so $0\neq \alpha\in \Sh^2(Q_8,\Z/2\Z)$.
		
		\ref{c4c2}. Let $a:=(1,0)$ and $b:=(0,1)$ be two generators of $G=\Z/4\Z\times \Z/2\Z$. Let $\Gamma:=G\rtimes \Z/2\Z$, where the generator of $\Z/2\Z$ acts on $G$ by sending $a\mapsto ab$ and $b\mapsto b$. The center of $\Gamma$ is $\ang{(2a,0),(b,0)}\cong \Z/2\Z\times \Z/2\Z$, and we have a short exact sequence \begin{equation}\label{z4z2}1\to \Z/2\Z\xrightarrow{\iota} \Gamma\xrightarrow{\pi}G\to 1,\end{equation} where $\iota(1):=b$. Let $\gamma\in \Z/4\Z\times \Z/2\Z$, and consider the exact sequence
		\[1\to \Z/2\Z\to \Gamma'\to\ang{\gamma}\to 1,\] where $\Gamma':=\pi^{-1}(\ang{\gamma})$. It is easy to check that $(b,0)$ is not the square of any element of $\Gamma$, hence $\pi^{-1}(\ang{\gamma})$ is a non-cyclic extension of $\ang{\gamma}$ by $\Z/2\Z$. If $\ang{\gamma}\cong\Z/2\Z$, then necessarily $\Gamma'\cong \Z/2\Z\times \Z/2\Z$, and if $\ang{\gamma}\cong \Z/4\Z$, then $\Gamma'\cong \Z/4\Z\times \Z/2\Z$. This shows that the class of (\ref{z4z2}) in $H^2(\Z/4\Z\times \Z/2\Z,\Z/2\Z)$ is a non-trivial element of $\Sh^2(\Z/4\Z\times \Z/2\Z,\Z/2\Z)$.
	\end{proof}
	
	In the next proposition we collect the two pieces of the information needed to complete the proof of \Cref{classify} that we obtained via a computer calculation. We use Algorithm F2 of \cite[\S 5.2]{hoshi2017rationality}, implemented in the computer algebra system GAP. I thank Thomas R\"ud for helping me understand the code in \cite{hoshi2017rationality}.
	\begin{prop}\label{gap}
		If $G=Q_8$, then $BT[4]$ is not retract rational.
		
		If $G=(\Z/2\Z)^3$, then $BT[2]$ is not retract rational.	
	\end{prop}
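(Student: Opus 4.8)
The plan is to reduce both assertions to the non-invertibility of a single explicit integral $G$-lattice, and then to certify that non-invertibility with the flasque-class algorithm of \cite{hoshi2017rationality}. By \cite[Proposition 6.1]{scavia2018retract}, $BT[d]$ is retract rational if and only if $T_d$ is, and by \Cref{coflres} together with \cite[Theorem 3.14(a)]{saltman1984retract} this holds if and only if the flasque lattice $N_d'$ — equivalently $N_d$ itself — is invertible. Thus it suffices to show that $N_d$ is \emph{not} invertible when $(G,d)=(Q_8,4)$ and when $(G,d)=((\Z/2\Z)^3,2)$. First I would record why the elementary obstructions proved above are silent in exactly these two cases. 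Every subgroup of order $d$ is cyclic here (the order-$4$ subgroups of $Q_8$ and the order-$2$ subgroups of $(\Z/2\Z)^3$), so the necessary condition of \Cref{cyclic} is satisfied and imposes no constraint. Moreover the $\Sh^2$-obstruction of \Cref{sha} vanishes: one checks $\Sh^2(Q_8,\Z/4\Z)=0$, and for $(\Z/2\Z)^3$, writing $H^\ast((\Z/2\Z)^3,\Z/2\Z)=\Z/2\Z[x_1,x_2,x_3]$, the restriction of a class $\sum a_{ij}x_ix_j$ to the cyclic subgroup generated by $g=(g_1,g_2,g_3)$ is the scalar $\sum a_{ij}g_ig_j\in\Z/2\Z$, which vanishes for all $g\neq 0$ only when every $a_{ij}=0$. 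Hence neither \Cref{cyclic} nor \Cref{sha} decides the question, which is precisely what forces a finer, computational invariant.

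Next I would present the problem to GAP as an explicit lattice. Using the presentation (\ref{norm1torsion0}), $\hat{T}_d$ is the cokernel of the map $\Z\to\Z[G]\oplus\Z$ sending $1\mapsto(1,\dots,1;-d)$, hence a $G$-lattice of rank $|G|=8$ in both cases; fixing generators of $G$ and the induced permutation action on $\Z[G]$ yields explicit integer matrices for the action on a chosen $\Z$-basis. Equivalently one may feed in the coflasque resolution (\ref{cofl}) and read off $N_d$ directly. With this data I would run Algorithm F2 of \cite[\S 5.2]{hoshi2017rationality}, which computes the flasque class $[\hat{T}_d]^{fl}$ and tests whether it is invertible.

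The substance of the verification, and the step I expect to be the genuine obstacle, is the invertibility test itself. Unlike the vanishing of $\Sh^2$ — which only sees $H^1$ of the whole group $G$ — invertibility asks whether $N_d$ is a direct summand of a permutation lattice, a question about the integral representation type of $N_d$ over $\Z[G]$, and hence over each localization $\Z_{(p)}[G]$ and each $p$-subgroup. The algorithm reduces this to a finite cohomological and genus-theoretic computation, and in both cases it returns a flasque class that is not invertible; by the reduction above this gives that $BT[4]$ is not retract rational for $G=Q_8$ and that $BT[2]$ is not retract rational for $G=(\Z/2\Z)^3$. To keep the argument reproducible I would include the explicit matrices defining the two lattices and a short GAP transcript recording the output of Algorithm F2.
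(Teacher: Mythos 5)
Your proposal is correct and takes essentially the same route as the paper: both reduce the claim to the non-invertibility of the flasque class of $\hat{T}_d$ (equivalently of $N_d$, via \Cref{coflres} and \cite[Theorem 3.14(a)]{saltman1984retract}, after identifying the retract rationality of $BT[d]$ with that of $T_d$), and both certify that non-invertibility by a GAP computation with Algorithm F2 of \cite[\S 5.2]{hoshi2017rationality} applied to an explicit presentation of the lattice. Your preliminary observations that \Cref{cyclic} and the $\Sh^2$-obstruction of \Cref{sha} are silent in exactly these two cases match the remark the paper records immediately after the proposition.
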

	
	\begin{proof}
		If $(G,d)$ equals $(Q_8,4)$ or $((\Z/2\Z)^3,2)$, it is enough to check that $T_d$ is not retract rational, that is, that $[T_d]^{fl}$ is not invertible. Using the presentation (\ref{norm1torsion}), this can be done using Algorithm F2 of \cite[\S 5.2]{hoshi2017rationality}.
	\end{proof}
	
	\begin{rmk}
		Similarly to the proof of \Cref{smallcases}\ref{dih} (or indeed by another GAP computation), one may show that $\Sh^2(Q_8,\Z/4\Z)=0$ and $\Sh^2((\Z/2\Z)^3,\Z/2\Z)=0$. In other words, in the two cases considered in \Cref{gap}, $T_d$ is coflasque but not retract rational; cf. \cite[\S 4 Example 2, Theorem 5]{voskresenskii2011algebraic}. We will not make use of this in the sequel.
	\end{rmk}
	
	\begin{proof}[Proof of \Cref{classify}, $p=2$]
		If $d$ is odd, the conclusion follows from \Cref{periodicity} \ref{periodicity5} and \ref{periodicity4}. From now on, we assume that $d$ is even.
		
		\ref{classify3}. The proof is the same as that of \ref{classify1}. When $G$ is cyclic, every torus split by $G$ is retract rational by \cite[Proposition 2]{colliot1977r}, hence $T_d$ is retract rational, and so $BT[d]$ is also retract rational.
		
		\ref{classify4}. When $4\nmid d$, $BT[d]$ is retract rational by \Cref{smallcases}\ref{dih}, so assume $4\mid d$. If $G=D_4=\Z/2\Z\times \Z/2\Z$, then by \Cref{periodicity}\ref{periodicity2} $BT[d]$ is stably birational to $T$, since $T'$ is rational. The torus $T$ is not retract rational by \cite[Proposition 2]{colliot1977r}, hence $BT[d]$ is not retract rational in this case. If $G=D_{2^n}$ for some $n\geq 3$, then $G$ contains a subgroup isomorphic to $\Z/2\Z\times \Z/2\Z$, so the claim follows from \Cref{subgroup}.
		
		\ref{classify5}. Assume that $BT[d]$ is retract rational for some $d$ divisible by $4$. We prove by induction on $n$ that $G$ is either cyclic or dihedral, where $|G|=2^n$. If $n=1$ there is nothing to prove. If $n=2$, then by assumption $|G|=4$ divides $d$, hence by \Cref{periodicity}\ref{periodicity2} $BT[d]$ is stably birational to $T$. If $G=\Z/2\Z\times \Z/2\Z$, $T$ is not retract rational by \cite[Proposition 2]{colliot1977r}, hence $G=\Z/4\Z$. When $n=3$, by the case $n=2$ and \Cref{subgroup}, $G$ does not contain a subgroup isomorphic to $\Z/2\Z\times \Z/2\Z$, hence $G=\Z/8\Z$ or $Q_8$. When $G=Q_8$, $BT[d]$ is not retract rational by \Cref{gap}. If $n\geq 4$, by \Cref{subgroup} and the inductive assumption every subgroup of $G$ is either cyclic or dihedral, hence by \Cref{restrictive}\ref{restrictive3} $G$ is also cyclic or dihedral. 
		
		
		Assume now that $BT[d]$ is retract rational for some $d\equiv 2\pmod 4$. We prove that $G$ is either cyclic or dihedral by induction on $n$, where $|G|=2^n$. If $n=1,2$ there is nothing to prove. If $n=3$, then by \Cref{smallcases} $G'$ is not isomorphic to $Q_8$ or $\Z/4\Z\times \Z/2\Z$, and by \Cref{gap} it is not isomorphic to $(\Z/2\Z)^3$, hence it is either $\Z/8\Z$ or $D_8$, as desired. If $n\geq 4$, by inductive assumption every proper subgroup of $G$ is either cyclic or dihedral. By \Cref{restrictive}\ref{restrictive3}, $G$ is either cyclic or dihedral. 
	\end{proof}
	
	\begin{example}
		Let $L/k$ be a Galois extension with Galois group a symmetric group $S_n$, $d\geq 1$, and $p$ a prime. Recall that if $n/2< p\leq n$, so that $p\mid n!$ but $p^2\nmid n!$, then any $p$-Sylow subgroup of $S_n$ is cyclic of order $p$, and that if $p<n/2$ then any $p$-Sylow subgroup of $S_n$ contains a subgroup of the form $\Z/p\Z\times \Z/p\Z$. The $2$-Sylow subgroups of $S_n$ are cyclic for $n=2,3$, dihedral for $n=4,5$, and neither cyclic nor dihedral for $n\geq 6$ (for example because they contain subgroups of the form $D_8\times \Z/2\Z$). Using \Cref{norm1torsion} and \Cref{reducetosylow}, we obtain:
		\begin{itemize}
			\item If there exists an odd prime $p$ such that $2p\leq n$ and $p\mid d$, then $BT[d]$ is not retract rational;
			\item If $n=4,5$, and $4\mid d$, then $BT[d]$ is not retract rational;
			\item If $n\geq 6$, and $d$ is even, then $BT[d]$ is not retract rational. 
		\end{itemize}
		In all other cases, $BT[d]$ is retract rational. 
		
		Assume further that $d\mid |S_n|=n!$; by \Cref{periodicity} one can reduce to this case by multiplying by some invertible element in $\Z/e(T)\Z$. Define $\sigma:\mathbb{N}\to\mathbb{N}$ by \[\sigma(1)=1,\ \sigma(2)=2,\ \sigma(3)=6,\ \sigma(4)=6,\ \sigma(5)=30\] and \[\sigma(n)=\prod_{n/2<p\leq n}p,\qquad n\geq 6.\] Then we have just shown that $BT[d]$ is $p$-retract rational if and only if $d\mid \sigma(n)$. 
	\end{example}
	
	\section{Application to the Grothendieck ring of stacks}\label{grotring}
	
	The ring $K_0(\on{Stacks}_k)$ was introduced by Ekedahl in \cite{ekedahl2009grothendieck}. By definition, it is the quotient of the free abelian group on the equivalence classes $\set{\mc{X}}$ of all algebraic stacks $\mc{X}$ of finite type over $k$ with affine stabilizers, by the scissor relations $\set{\mc{X}}=\set{\mc{Y}}+\set{\mc{X}\setminus \mc{Y}}$ for every closed substack $\mc{Y}\c \mc{X}$ and the relations $\set{\mc{E}}=\set{\mc{X}\times \A^n_k}$ for every vector bundle $\mc{E}$ of rank $n$ over $\mc{X}$. The product is defined by $\set{\mc{X}}\cdot\set{\mc{Y}}:=\set{\mc{X}\times_k\mc{Y}}$, and extended by linearity. 
	
	If $G$ is a finite or connected group, there appears to be connection between properties of $\set{BG}$ and the Noether Problem for $G$; as of now, the link between the two is largely unexplained in either direction. Consider the following equations in $K_0(\on{Stacks}_k)$, called \emph{expected class formulas}: 
	\begin{align*}
	&\set{BG}=1, &\text{$G$ finite group},\\
	&\set{BG}=\set{G}^{-1}, &\text{$G$ connected linear group}. 
	\end{align*}
	As Ekedahl shows in \cite{ekedahl2009geometric}, it frequently happens that $\set{BG}=1$ for a finite group $G$. This is true, for example, if $G$ is a symmetric group; see \cite[Theorem 4.3]{ekedahl2009geometric}. It is striking that all the known counterexamples $G$ to $\set{BG}=1$ are also counterexamples to the Noether Problem.
	
	If $G$ is a connected group and $k$ is algebraically closed, no counterexample to $\set{BG}\set{G}=1$ is known; this is again in line with the Noether Problem, for which no negative answer is known among connected groups. In \cite[Theorem 1.5]{scavia2018motivic}, we exhibited the first connected counterexample $T$ to the expected class formula, in the case when $k$ is finitely generated over $\Q$. More precisely, $T:=R^{(1)}_{E_1\times E_2/k}(\G_m)$, where $E_1$ and $E_2$ are distinct quadratic extensions of $k$; see \cite[\S 3]{scavia2018motivic}. Using this result, we showed that $\set{BT'[2d]}=\set{BT}^{-1}\set{T}^{-1}\neq 1$ in \cite[Theorem 1.6]{scavia2018motivic} for every $d\geq 1$. As usual, here $T'=R_{E/k}(\G_m)/\G_m$ is the dual torus of $T$. On the other hand, one has $\set{BT[2d-1]}=1$ for every $d\geq 1$. \Cref{periodicity} provides a conceptual explanation for this periodicity. Moreover, it allow us to compute $\set{BT[n]}$ for every $n\geq 1$.
	
	\begin{prop}\label{grotper}
		Let $T$ be a $k$-torus, and $m,n\geq 1$ be integers.
		\begin{enumerate}[label=(\alph*)]
			\item\label{grotper1} If $n\equiv \pm m \pmod{e(T)}$, then $\set{BT[n]}=\set{BT[m]}$.
			\item\label{grotper2} If $n\equiv \pm 1 \pmod{e(T)}$, then $\set{BT[n]}=1$.
			\item\label{grotper3} If $e(T)\mid n$, then $\set{BT[n]}=\set{BT}\set{T}$.
		\end{enumerate}
	\end{prop}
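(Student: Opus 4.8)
The plan is to reduce all three statements to a single formula expressing $\set{BT[d]}$ in terms of the tori $T_d$ produced in \Cref{multiple}, and then to read off the periodicity directly from the genuine isomorphisms established there (not merely from the stable birationality of \Cref{periodicity}, which is too coarse to control classes in $K_0(\on{Stacks}_k)$).

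First I would establish the key identity
\[
\set{BT[d]} = \set{T_d}\cdot\set{S}^{-1} \qquad\text{in } K_0(\on{Stacks}_k),
\]
where $S$ is the quasi-split torus of the coflasque resolution (\ref{resolution}) and $T_d = S/T[d]$. The input is that $\psi\colon S\to T_d$ is the versal $T[d]$-torsor, so the closed immersion $T[d]\hookrightarrow S$ induces a representable morphism $BT[d]\to BS$ whose base change along the universal $S$-torsor $\Spec k\to BS$ is the scheme $S/T[d]=T_d$. Since $S$ is a quasi-split torus it is special, so every $S$-torsor is Zariski-locally trivial; hence $BT[d]\to BS$ is Zariski-locally trivial with fiber $T_d$, and its class multiplies: $\set{BT[d]}=\set{T_d}\cdot\set{BS}$. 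Specialness of $S$ also gives $\set{S}\cdot\set{BS}=\set{\Spec k}=1$, so $\set{S}$ is invertible and $\set{BS}=\set{S}^{-1}$, which yields the formula. This is essentially the computation underlying \cite[Theorem 1.6]{scavia2018motivic}.

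With the formula in hand, the three parts are direct substitutions using \Cref{multiple}. For \ref{grotper1}, if $n\equiv\pm m\pmod{e(T)}$ then $T_m\cong T_n$, so $\set{T_m}=\set{T_n}$ and hence $\set{BT[m]}=\set{BT[n]}$. For \ref{grotper2}, since $T[1]$ is trivial we have $T_1=S$, and $n\equiv\pm1\pmod{e(T)}$ gives $T_n\cong T_1=S$; thus $\set{BT[n]}=\set{S}\cdot\set{S}^{-1}=1$. For \ref{grotper3}, if $e(T)\mid n$ then $T_n\cong T\times Q$, so $\set{T_n}=\set{T}\set{Q}$. Applying the same fibration argument to the resolution $1\to T\to S\to Q\to 1$ (again with $S$ special, with fiber $S/T=Q$) gives $\set{BT}=\set{Q}\cdot\set{S}^{-1}$; combining, $\set{BT[n]}=\set{T}\set{Q}\set{S}^{-1}=\set{T}\cdot\set{BT}=\set{BT}\set{T}$, as claimed.

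The main obstacle is the justification of the key identity, and specifically the multiplicativity $\set{BT[d]}=\set{T_d}\set{BS}$ for the fibration $BT[d]\to BS$. The delicate points are that $T[d]$ may be non-reduced in positive characteristic, so one must work in $K_0(\on{Stacks}_k)$ with its finite, possibly non-smooth stabilizers, and that the multiplicativity relies on the Zariski-local triviality coming from the specialness of the quasi-split torus $S$, rather than on any specialness of $T$ itself. Indeed the failure of $\set{BT}=\set{T}^{-1}$ for general $T$ is precisely the phenomenon these formulas are designed to detect, so the argument must carefully route through $S$. Once this local triviality and the invertibility of $\set{S}$ are secured, the rest is routine.
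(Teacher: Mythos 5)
Your proposal is correct and takes essentially the same approach as the paper: both reduce all three parts to the identities $\set{BT[d]}=\set{T_d}/\set{S}$ and $\set{BT}=\set{Q}/\set{S}$ (valid because the quasi-split torus $S$ is special, making $\set{S}$ invertible) and then substitute the genuine isomorphisms $T_n\cong T_m$, $T_1=S$, and $T_n\cong T\times Q$ from \Cref{multiple}. The only difference is that where you sketch the fibration argument for the key identity by hand, the paper simply cites it, namely \cite[Corollary 2.4]{bergh2015motivic} for $\set{BS}\set{S}=1$ and \cite[Proposition 2.9]{bergh2015motivic} for the quotient formulas.
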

	
	
	\begin{proof}
		Fix $d\geq 1$, and consider a diagram (\ref{noethertorsion1}) for $T$. Since the quasi-split torus $S$ is special, by \cite[Corollary 2.4]{bergh2015motivic} we have $\set{BS}\set{S}=1$ in $K_0(\on{Stacks}_k)$, hence $\set{S}$ is invertible. By \cite[Proposition 2.9]{bergh2015motivic} we have \begin{equation}\label{grotpereq1}\set{BT}=\set{Q}/\set{S}\end{equation} and \begin{equation}\label{grotpereq2}\set{BT[d]}=\set{T_d}/\set{S}\end{equation} in $K_0(\on{Stacks}_k)$.
		
		\ref{grotper1}. If $n\equiv m\pmod{e(T)}$, by \Cref{multiple} $T_n\cong T_m$, hence the claim follows from (\ref{grotpereq2}).	
		
		\ref{grotper2}. If $n\equiv \pm 1 \pmod{e(T)}$ then by \ref{grotper1} $\set{BT[n]}=\set{BT[1]}=1$.
		
		\ref{grotper3}. If $e(T)\mid n$, by \Cref{multiple} $T_n\cong T\times Q$, so $\set{T_d}=\set{T}\set{Q}$. Combining this with (\ref{grotpereq1}) and (\ref{grotpereq2}) (for $n=d$), we obtain \[\set{BT[n]}=\set{T_n}/\set{S}=\set{T}\set{Q}/\set{S}=\set{T}\set{BT}.\qedhere\]
	\end{proof}
	
	\begin{prop}
		Let $T:=R^{(1)}_{E/k}(\G_m)$, where $E$ is a product of two separable quadratic extensions of $k$. For every $d\geq 1$, we have \[\set{BT[2d-1]}=1,\qquad \set{BT[2d]}=\set{BT}\set{T}\] in $K_0(\on{Stacks}_k)$.	
	\end{prop}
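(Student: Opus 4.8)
The plan is to deduce the two formulas from \Cref{grotper} once we know that $e(T)=2$. Write $L:=E_1E_2$ for the compositum of the two (distinct) quadratic extensions, so that $T$ is split by $G:=\on{Gal}(L/k)\cong\Z/2\Z\times\Z/2\Z$, generated by $\sigma,\tau$ with $E_1=L^{\ang{\sigma}}$ and $E_2=L^{\ang{\tau}}$. Granting $e(T)=2$, every odd integer satisfies $2d-1\equiv\pm 1\pmod{e(T)}$, so \Cref{grotper}\ref{grotper2} gives $\set{BT[2d-1]}=1$, while $e(T)=2\mid 2d$, so \Cref{grotper}\ref{grotper3} gives $\set{BT[2d]}=\set{BT}\set{T}$. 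Thus the entire statement reduces to the computation $e(T)=2$.

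To compute $e(T)$ I would use the defining sequence $1\to T\to S\xrightarrow{N}\G_m\to 1$, where $S:=R_{E/k}(\G_m)=R_{E_1/k}(\G_m)\times R_{E_2/k}(\G_m)$ is quasi-trivial, with character lattice the permutation module $\hat S=\Z[G/\ang{\sigma}]\oplus\Z[G/\ang{\tau}]$; this is a resolution of the type (\ref{resolution}), with $Q=\G_m$. For the upper bound, for any field $K\supseteq k$ one has $H^1(K,S)=0$ by Hilbert~90 and Shapiro's lemma, so the long exact sequence identifies $H^1(K,T)$ with $K^\times/N_{E/k}((E\otimes_kK)^\times)$. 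Taking $x=(b,1)\in(E_1\otimes_kK)^\times\times(E_2\otimes_kK)^\times$ for $b\in K^\times$ gives $N_{E/k}(x)=N_{E_1/k}(b)=b^2$, so $(K^\times)^2$ lies in the norm subgroup and $H^1(K,T)$ is $2$-torsion for every $K$. Hence every $T$-torsor, and in particular the generic one, has period dividing $2$, i.e. $e(T)\mid 2$.

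The only delicate point is the lower bound $e(T)\neq 1$, and this is where I expect the real work to lie. By \cite[Theorem 3.1]{merkur2010periods}, $e(T)$ is the order of the class of the dualized sequence $0\to\Z\to\hat S\to\hat T\to 0$ in $\on{Ext}^1_G(\hat T,\Z)$; were it $1$, the sequence would split, exhibiting $\hat T$ as a direct summand of the permutation lattice $\hat S$, hence invertible and in particular coflasque. I would rule this out by computing $H^1(G',\hat T)$ for the diagonal subgroup $G':=\ang{\sigma\tau}$, which corresponds to the third quadratic subextension of $L$. Since $\sigma\tau$ lies in neither $\ang{\sigma}$ nor $\ang{\tau}$, it acts freely on both coset spaces $G/\ang{\sigma}$ and $G/\ang{\tau}$, so $\hat S$ restricts to a free $G'$-module and $H^1(G',\hat S)=H^2(G',\hat S)=0$. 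The long exact sequence attached to $0\to\Z\to\hat S\to\hat T\to 0$ then yields $H^1(G',\hat T)\cong H^2(G',\Z)\cong\Z/2\Z\neq 0$, so $\hat T$ is not coflasque and $e(T)\neq 1$. Combining the two bounds gives $e(T)=2$, completing the argument. The upper bound is formal, but producing the genuine obstruction in the lower bound is the crux: the choice of the diagonal subgroup $\ang{\sigma\tau}$ is exactly what kills the permutation cohomology and isolates the nonzero class in $H^2(G',\Z)$.
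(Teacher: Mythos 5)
Your proof is correct, and its skeleton is the same as the paper's: both reduce everything to the single computation $e(T)=2$ and then apply \Cref{grotper}\ref{grotper2} (odd case) and \Cref{grotper}\ref{grotper3} (even case). The difference lies in how $e(T)=2$ is established: the paper starts from the same norm sequence $0\to\Z\to\Z[C_2]\oplus\Z[C_2]\to\hat{T}\to 0$ but outsources the period computation to \cite[Example 5.1]{merkur2010periods}, stating that the argument there adapts verbatim, whereas you give a self-contained proof. Both halves of your computation are sound. The upper bound is the norm-group argument: $H^1(K,T)\cong K^{\times}/N_{E\otimes_k K/K}((E\otimes_k K)^{\times})$ is killed by $2$ because $N(b,1)=b^{2}$, and this holds for every field $K\supseteq k$, hence for the generic torsor. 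For the lower bound, your appeal to \cite[Theorem 3.1]{merkur2010periods} is legitimate because the trivial lattice $\Z$ is coflasque, so $0\to\Z\to\hat{S}\to\hat{T}\to 0$ is indeed a coflasque resolution (this is exactly how the paper itself uses that theorem in \Cref{multiple}); and restricting to the diagonal subgroup $\ang{\sigma\tau}$ does make $\hat{S}$ a free module, giving $H^1(\ang{\sigma\tau},\hat{T})\cong H^2(\ang{\sigma\tau},\Z)\cong\Z/2\Z\neq 0$ and hence non-splitting of the extension class. One caveat: the statement as written allows $E_1\cong E_2$, in which case there is no biquadratic compositum and no diagonal subgroup; but then $G\cong\Z/2\Z$ and $\hat{S}\cong\Z[G]^{2}$ is already free as a $G$-module, so the identical long-exact-sequence computation with $G'=G$ gives $H^1(G,\hat{T})\cong\Z/2\Z\neq 0$, and your argument covers this degenerate case after a one-line adjustment.
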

	
	\begin{proof}
		Using the norm exact sequence \[0\to\Z\to \Z[C_2]\oplus \Z[C_2]\to \hat{T}\to 0,\] the argument of \cite[Example 5.1]{merkur2010periods} adapts verbatim to a proof that $e(T)=2$. The conclusion now follows from \Cref{grotper}.
	\end{proof}
	
	
	Since $\set{BT}\set{T}\neq 1$, the previous proposition gives a more conceptual proof of \cite[Theorem 1.6]{scavia2018motivic}.

	\section*{Acknowledgments}
	I would like to thank my advisor Zinovy Reichstein for useful comments on this work, and Thomas R\"ud for helping me with GAP.
	\bibliography{bibliografia}
	\bibliographystyle{plain}
	
\end{document}